\definecolor{ForestGreen}{rgb}{0.13, 0.55, 0.13}
\theoremstyle{plain} 
\newtheorem{theorem}{Theorem}[section] 
\newtheorem*{theorem*}{Main Result}
\newtheorem{thm*}{Known result}
\newtheorem{corollary}[theorem]{Corollary} 
\newtheorem{lemma}[theorem]{Lemma}
\newtheorem{definition}[theorem]{Definition}
\theoremstyle{definition}
\theoremstyle{remark}
\newtheorem{remark}[theorem]{Remark}
\numberwithin{equation}{section}
\DeclareMathOperator{\supp}{supp}
\DeclareMathOperator{\loc}{loc}
\DeclareMathOperator{\Ric}{Ric}
\def\RR{\mathbb{R}}
\title[Semilinear wave inequalities on Riemannian Manifolds]{Semilinear wave inequalities with double damping and potential terms on Riemannian Manifolds}
\author[M. Jleli]{Mohamed Jleli}
\address{(M. Jleli) Department of Mathematics, College of Science, King Saud University, Riyadh 11451, Saudi Arabia}
\curraddr{}
\email{jleli@ksu.edu.sa}
\thanks{}
\author[M. Ruzhansky]{Michael Ruzhansky}
\address{(M. Ruzhansky) Department of Mathematics: Analysis, Logic and Discrete Mathematics, Ghent University, Belgium\newline 
School of Mathematical Sciences, Queen Mary University of London, United Kingdom}
\email{michael.ruzhansky@ugent.be}
\thanks{}
\author[B. Samet]{Bessem Samet}
\address{(B. Samet) Department of Mathematics, College of Science, King Saud University, Riyadh 11451, Saudi Arabia}
\curraddr{}
\email{bsamet@ksu.edu.sa}
\thanks{}
\author[B.T. Torebek]{Berikbol T. Torebek}
\address{(B.T. Torebek) Department of Mathematics: Analysis, Logic and Discrete Mathematics, Ghent University, Ghent, Belgium\newline 
Institute of Mathematics and Mathematical Modeling, Almaty, Kazakhstan}
\email{berikbol.torebek@ugent.be}
\thanks{}
\date{}
\begin{document}

\subjclass[2010]{58J45; 	35A01; 35B44; 35B33}

\keywords{Semilinear wave inequality; Riemannian manifold; Ricci curvature; nonexistence; critical exponent}

\date{}

\dedicatory{}

\begin{abstract}
We study a semilinear wave inequality with double damping   on a complete noncompact Riemannian manifold.  The considered problem involves a potential  function $V$ depending on the space variable in front of the power nonlinearity and an inhomogeneous term $W$ depending on both time and space variables. Namely, we establish sufficient conditions for the nonexistence of weak solutions in both cases: $W\equiv 0$ and $W\not\equiv 0$. The obtained conditions depend on the parameters of the problem as well as the geometry of the manifold. Some special cases of manifolds, and of $V$ and $W$ are discussed in detail. 
\end{abstract}

\maketitle

\tableofcontents

\section{Introduction}\label{sec1}

Let  $\mathbb{M}$ be a complete noncompact Riemannian manifold of dimension $N$ equipped with a metric $g=(\cdot,\cdot)$ and a Riemannian distance $d(\cdot,\cdot)$. Our purpose is to investigate the nonexistence of a weak solution to the double damping semilinear wave inequality

\begin{eqnarray}\label{P1}
\left\{\begin{array}{llll}
u_{tt}-\Delta u+u_t-\Delta  u_t \geq  V(x)|u|^p +W(t,x), \,\,t>0,\,x\in \mathbb{M}, \\[10pt]
(u(0,x),u_t(0,x))=(u_0(x),u_1(x)),\,\,x\in \mathbb{M},	
\end{array}
\right.	
\end{eqnarray}
where $u=u(t,x)$, $\Delta$ is the Laplace-Beltrami operator, $p>1$, $V$ is a measurable function, $V>0$ a.e. (almost everywhere) in $\mathbb{M}$, $W\in L^1_{\loc}([0,\infty)\times \mathbb{M})$, $W\geq 0$ a.e. in $(0,\infty)\times \mathbb{M}$ and $u_i\in L^1_{\loc}(\mathbb{M})$, $i=0,1$.  The cases $W\equiv 0$ and $W\not\equiv 0$ are studied separately.  

The analysis of semilinear damped wave equations on  $\RR^N$ has a long history.  In \cite{TO}, the semilinear wave equation with frictional damping
\begin{equation}\label{P1-Ref}
u_{tt}-\Delta u+u_t	=|u|^p,\quad t>0,\,x\in \RR^N,
\end{equation}
where $p>1$, has been investigated. It was shown that $1+\frac{2}{N}$ is a critical exponent for \eqref{P1-Ref} in the following sense: 
\begin{itemize}
\item[(a)] 	If $\int_{\RR^N}u(0,x)\,dx>0$, $\int_{\RR^N}u_t(0,x)\,dx>0$ and $p<1+\frac{2}{N}$, then \eqref{P1-Ref} possesses no global solution;
\item[(b)] If $1+\frac{2}{N}<p<\frac{N}{N-2}$, $N\geq 3$ or $p>1+\frac{2}{N}$, $N\in\{1,2\}$,  then \eqref{P1-Ref} admits a unique global solution for suitable initial values. 
\end{itemize}
It was shown later (see \cite{Zhang 2001}) that the critical exponent $1+\frac{2}{N}$ belongs to case (a). We recall that $1+\frac{2}{N}$ is also critical for the semilinear parabolic equation $u_t-\Delta u=|u|^p$, $p>1$ (see Fujita \cite{Fujita}). In \cite{IK,IK2}, the authors considered the semilinear wave equation with double damping
\begin{equation}\label{P1-Ref2}
u_{tt}-\Delta u+u_t-\Delta u_t=|u|^p, \quad t>0,\,x\in \RR^N,
\end{equation}
where $p>1$.  It was proven that $1+\frac{2}{N}$ is also critical for \eqref{P1-Ref2} in the case $N\in\{1,2,3\}$.  In \cite{DA}, it was shown that under suitable conditions on the initial data, $1+\frac{2}{N}$ is critical for   \eqref{P1-Ref2} for every dimension $N$. In \cite{JSV21}, the authors considered \eqref{P1-Ref2} with a potential function $V$ and an inhomogeneous term $W$, namely
\begin{equation}\label{P1-Ref3}
u_{tt}-\Delta u+u_t-\Delta u_t=V(x)|u|^p+W(t,x),  \quad t>0,\,x\in \RR^N,
\end{equation}
where $p>1$,  $V$ is a measurable function, $V>0$ a.e.  in $\mathbb{R}^N$, $W\in L^1_{\loc}((0,\infty)\times \mathbb{R}^N)$, $W\geq 0$ a.e. in $(0,\infty)\times \mathbb{R}^N$ and  $W\not\equiv 0$. Namely, a general nonexistence theorem has been established.  In particular, if $V(x)=(1+|x|^2)^{\frac{\alpha}{2}}$ and $W=w(x)$, where $\alpha>-2$ and $w\geq 0$ is a nontrivial $L^1_{\loc}(\RR^N)$-function, it was proven that, if $N=2$, then \eqref{P1-Ref3} admits no global weak solution; if $N\geq 3$, then $\frac{N+\alpha}{N-2}$ is critical for \eqref{P1-Ref3}.  It is interesting to observe that when $N\geq 3$, the critical exponent of \eqref{P1-Ref2} jumps from $1+\frac{2}{N}$ to $1+\frac{2}{N-2}$ (the critical exponent for \eqref{P1-Ref3} with $V\equiv 1$ ($\alpha=0$) and  $W=w(x)$). Other references related to nonlinear damped wave equations on $\RR^N$ can be found in \cite{HA1,HA2,HO,IKeda,IK3,Kirane} (see also the references therein).

The issue of nonexistence for hyperbolic inequalities on $\RR^N$ has been investigated in several papers. For instance, in \cite{MI} (see also  \cite{PO}), the authors studied the semilinear wave inequality
\begin{equation}\label{P1-Ref4}
u_{tt}-\Delta u\geq |u|^p, 	\quad t>0,\,x\in \RR^N,
\end{equation}
where $p>1$ and $N\geq 2$.  Namely, it was shown that, if 
$$
\liminf_{R\to \infty} \int_{|x|<R}u_t(0,x)\,dx\geq 0,
$$ 
then for all 
\begin{equation}\label{Kato}
p\leq \frac{N+1}{N-1}
\end{equation}
\eqref{P1-Ref4} possesses no nontrivial weak solution. Furthermore, the authors pointed out the sharpness of condition \eqref{Kato}. For other results related to hyperbolic inequalities on $\RR^N$, see e.g. \cite{FI,JS2,LA,MI2}. 

The issue of nonexistence for evolution equations and inequalities have been also studied  when the Euclidean space is replaced by  a Heisenberg or graded Lie groups \cite{BO-22,DAM,Fino,FIR,JKS,KA,PO2,RU11,RU01,RU3,Zhang0} and a  hyperbolic space  \cite{BA,Punzo,TAT,Wang,Wang2}. 

Some extensions of nonexistence theorems when the Euclidean space is replaced by a noncompact Riemannian  manifold have been obtained in \cite{GR-K,GR-Sun,JSV,MA,MPS,Zhang1,Zhang2}. For instance, in \cite{Zhang2}, the author considered the inhomogeneous semilinear wave equation
\begin{equation}\label{P-Zhang}
u_{tt}-\Delta u=|u|^p+W(x),\quad t>0,\, x\in \mathbb{M},	
\end{equation}
where $\mathbb{M}$ is a noncompact complete manifold of dimension $N\geq 3$, $p>1$,  $W\geq 0$ is $L^1_{\loc}$-function and $W\not\equiv 0$.   The considered manifold $\mathbb{M}$ was supposed to satisfy the following conditions:
\begin{itemize}
\item[(C$_1$)] 	The Ricci curvature of $\mathbb{M}$ is nonnegative;
\item[(C$_2$)] There exists a point $x_0\in  \mathbb{M}$ such that the cut locus to it is empty;
\item[(C$_3$)] $\left|\partial \log g^{1/2}/\partial r\right|\leq C/r$, where $g^{1/2}$ is the volume density and $r$ is the distance from $x_0\in \mathbb{M}$;
\item[(C$_4$)] There exists a nonnegative function $f$ such that 
$$
C^{-1}f(t)\leq |B(x,t)|\leq Cf(t),\quad t>0,
$$ 
where $C>0$ is constant, $f(t)\sim t^\alpha$ as $t\to \infty$, $\alpha>2$, and $f(t)\sim t^N$ as $t\to 0^+$. 
\end{itemize}
Under the above conditions, it was proven that  $\frac{\alpha}{\alpha-2}$ is the critical exponent for \eqref{P-Zhang}. More precisely, it was shown that, 
\begin{itemize}
\item[(i)] if $1<p<\frac{\alpha}{\alpha-2}$, then \eqref{P-Zhang} possesses no
global solution for any initial data;
\item[(ii)] if $p>\frac{\alpha}{\alpha-2}$, then \eqref{P-Zhang} admits global solutions for some $w>0$. 
\end{itemize}
Very recently, it was proven in \cite{JSV} that the critical exponent  $\frac{\alpha}{\alpha-2}$  belongs to the case (i).  

Notice that the considered assumptions on the manifold $\mathbb{M}$ (in particular condition (C$_3$)) are  somewhat restrictive.  In \cite{GR-K,GR-Sun,MA}, the used argument for the proofs of the  nonexistence theorems is based on the nonlinear capacity method (see e.g. \cite{MI2}) with an appropriate choice of radial test functions, i.e. functions depending on $r(x)$. Notice that in the weak formulations of the considered problems, only the gradient of such test functions was involved, but not their Laplacian. This leads to the consideration of a general manifold $\mathbb{M}$ since the gradient of $r(x)$ is well-defined.  In our case, due to the presence of the term $u_{tt}$ in \eqref{P1} and the fact that no restriction is imposed on the sign of $u$, the above technique cannot be used. Namely, the weak formulation of \eqref{P1} involves $\Delta \psi$, where $\psi$ belongs to a certain class of test functions. Consequently, we need a family of smooth test functions with controlled gradient and Laplacian.  To overcome this difficulty, we make use of a  result due to Bianchi and  Setti \cite{BI}, where a family of cut-off functions with controlled gradient and Laplacian on manifolds with Ricci curvature bounded from below by a (possibly unbounded) nonpositive function of the distance from a fixed reference point has been constructed. We refer to \cite{JSS,JS3,JSV22}, where Bianchi-Setti result has been used to establish nonexistence theorems for some evolution inequalities. 

In this paper, we first consider \eqref{P1} with $W\equiv 0$. Under certain assumptions on the Ricci curvature of the manifold, we establish sufficient conditions for the nonexistence of weak solutions. We next consider the inhomogeneous case ($W(t,x)\not\equiv 0$) and study the effect of the term $W$ on the obtained nonexistence results  in the homogeneous case ($W\equiv 0$). 

The organization of this paper is as follows. In Section \ref{sec2}, we recall some notions and results from Riemannian geometry. In Section \ref{sec3}, after defining weak solutions to \eqref{P1}, we state our main results and study some particular cases of manifolds, potential functions $V$ and inhomogeneous terms $W$. Finally, Section \ref{sec4} is devoted to the proofs of our main results.

We end this section by fixing some notations. Throughout this paper, the symbols $C$ or  $C_i$
 denote always generic positive constants, which are independent of the scaling parameters $T$, $R$ and the solution $u$. Their values could be changed from one line to another. The notation $R\gg 1$ means that  $R>0$ is sufficiently large. We will use the notation $h\sim k$
 for two positive functions or quantities, which satisfy $C_1 h\leq k\leq C_2h$. 

\section{Preliminaries on Riemannian geometry} \label{sec2}

This section is devoted to some notions and results from Riemannian geometry. For more details, the reader is referred to \cite{ALI,GRI99,Peter}. 

Let $\mathbb{M}$ be a Riemannian manifold of dimension $N$, equipped with a metric $g=(\cdot,\cdot)$ and a Riemannian distance $d(\cdot,\cdot)$. Let $(U,\phi)$ be a chart in $\mathbb M$, where  $\phi: U\to \phi(U)\subset \mathbb{R}^N$ and 
$$
\phi(x) =(x^1(x),x^2(x),\cdots,x^N(x)),\quad x\in U.
$$
Let $\partial_i$, $i=1,2,\cdots,N$,  be  the corresponding vector fields on $U$. The local components of the metric $g$ are defined by
$$
g_{ij}=(\partial_i,\partial_j),\quad i,j=1,2,\cdots,N.
$$ 
The inverse of the matrix $(g_{ij})$ is denoted by $(g^{ij})$. For a smooth function $u: \mathbb M\to \mathbb{R}$, the gradient of $u$, $\nabla u$, is the vector field defined by  
$$
(\nabla u,X)=Xu
$$
for each vector field $X$ on $\mathbb M$.  In local coordinates, we have
$$
\nabla u=\sum_{i,j} g^{ij}(\partial_iu) \partial_j. 
$$
The divergence of  a vector field $X$ on $\mathbb M$ is defined by 
$$
\mbox{div}(X)(x)=\mbox{trace}\left(T_x\mathbb M \ni \xi \mapsto {\widetilde{\nabla}}_\xi X\in T_x\mathbb M\right),\quad x\in \mathbb M,
$$
where $T_x \mathbb M$ is the tangent vector space at $x\in \mathbb M$ and $\widetilde{\nabla}$ is the  Levi-Civita connection associated to the metric $g$. In local coordinates, if $X=\displaystyle \sum_{i}X_i\partial_i$, the divergence of $X$ can be expressed as
$$
\mbox{div}(X)=\frac{1}{\sqrt{\mathbf{g}}} \sum_j \partial_j\left(\sqrt{\mathbf{g}}X_j\right),
$$
where $\mathbf{g}=\det(g_{ij})$.  The Laplacian of $u$ is the function
$$
\Delta u= \mbox{div}(\nabla u).
$$
In local coordinates,  we have
$$
\Delta u=\frac{1}{\sqrt{\mathbf{g}}} \sum_{i,j} \partial_i\left(\sqrt{\mathbf{g}}g^{ij}\partial_j u\right).
$$
We denote by $\Ric$ the Ricci tensor which is expressed in local coordinates as
$$
R_{ij}=R_{ji}=\sum_{\ell} \partial_\ell\left(\Gamma_{ij}^\ell\right)-\sum_{\ell} \partial_j\left(\Gamma_{i\ell}^{\ell}\right)+\sum_{k,\ell} \left(\Gamma_{ij}^k\Gamma_{k\ell}^{\ell}-\Gamma_{i\ell}^k\Gamma_{kj}^\ell\right),
$$
where $\Gamma_{ij}^k$ are the Christoffel symbols. For a given function $f: \mathbb M\to \mathbb{R}$, the notation $\Ric\geq f(x)$ means that
$$
\Ric(X,X)\geq f(x) |X|^2
$$
for every vector field $X$ on $\mathbb  M$, where $|X|=\sqrt{(X,X)}$. 

For $\delta>0$, the geodesic ball with center $x_0\in \mathbb{M}$ and radius $\delta$, is denoted
by $B(x_0,\delta)$, that is, 
$$
B(x_0,\delta)=\left\{x\in \mathbb{M}: d(x_0,x)<\delta\right\}.
$$
The volume of $B(x_0,\delta)$ is denoted by $\left|B(x_0,\delta)\right|$.

We recall below a very important result due to  Bianchi and Setti (see \cite{BI}). 

\begin{lemma}\label{L2.1}{\rm{
	Let $x_0\in \mathbb{M}$ and suppose that for some $C_0\geq 0$ and $\sigma\in [-2,2]$,  we have   
	\begin{equation}\label{Ricc1} 
		\Ric\geq -C_0 (N-1) 	\left(1+d^2(x_0,x)\right)^{\frac{-\sigma}{2}}.
	\end{equation}
	Then,  there exists a family of functions  $\{\psi_R\}_{R> 1}\subset C_c^\infty(\mathbb M)$ satisfying the following:
	\begin{enumerate}
		\item[\rm{(i)}] $0\leq \psi_R\leq 1$, ${\psi_R}{|_{B(x_0,R)}}\equiv 1$;
		\item[\rm{(ii)}] there exists $\gamma_0=\gamma_0(\mathbb{M})>1$ (independent of $R$) such that 
		$$
		\supp(\psi_R)\subset B(x_0,\gamma R),\quad \gamma>\gamma_0;
		$$ 
		\item[\rm{(iii)}] $|\nabla \psi_R|\leq CR^{-1}$;
		\item[\rm{(iv)}] $|\Delta \psi_R|\leq CR^{-\left(1+\frac{\sigma}{2}\right)}$.
	\end{enumerate}
	Here, by $\psi_R\in C_c^\infty(\mathbb{M})$, we mean that   $\psi_R\in C^\infty(\mathbb{M})$ and $\supp(\psi_R)$ is compact in $\mathbb{M}$.}}
\end{lemma}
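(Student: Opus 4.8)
The plan is to build the cut-off functions from the distance function $r(x)=d(x_0,x)$ by composing with a fixed one-dimensional profile, and to control the gradient and Laplacian of $\psi_R$ through the Laplacian comparison theorem applied to the curvature hypothesis \eqref{Ricc1}. Concretely, fix $\chi\in C^\infty(\mathbb{R})$ with $0\le\chi\le 1$, $\chi\equiv 1$ on $(-\infty,1]$ and $\chi\equiv 0$ on $[\gamma_0,\infty)$ for some $\gamma_0>1$, and set (formally) $\psi_R(x)=\chi(r(x)/R)$. Then property (i) is immediate, and since $\chi$ vanishes on $[\gamma_0,\infty)$ the support of $\psi_R$ lies in $\{r\le\gamma_0 R\}\subset B(x_0,\gamma R)$ for every $\gamma>\gamma_0$, giving (ii).

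For the derivatives, since $|\nabla r|=1$ wherever $r$ is smooth, the chain rule gives $\nabla\psi_R=R^{-1}\chi'(r/R)\nabla r$, whence $|\nabla\psi_R|\le CR^{-1}$, which is (iii). For the Laplacian,
$$
\Delta\psi_R=R^{-2}\chi''(r/R)|\nabla r|^2+R^{-1}\chi'(r/R)\,\Delta r,
$$
and on the annulus where $\chi',\chi''$ are supported one has $r\sim R$. Thus everything reduces to an upper bound for $\Delta r$ on $\{r\sim R\}$.

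The key analytic input is the Laplacian comparison theorem: writing the hypothesis as $\Ric\ge-(N-1)\kappa(r)$ with $\kappa(r)=C_0(1+r^2)^{-\sigma/2}$, one compares with the model one-dimensional equation $h''-\kappa h=0$, $h(0)=0$, $h'(0)=1$, to obtain $\Delta r\le(N-1)h'(r)/h(r)$. An ODE/WKB analysis of this equation (the Euler equation $h''=C_0r^{-2}h$ when $\sigma=2$, the constant-coefficient case $h''=C_0h$ when $\sigma=0$, and the general case by comparison) yields $h'(r)/h(r)\le C r^{-\sigma/2}\sim C\sqrt{\kappa(r)}$ for $r\gg 1$. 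Feeding this into the display above and using $r\sim R$ gives $|\Delta\psi_R|\le C(R^{-2}+R^{-1}\cdot R^{-\sigma/2})\le CR^{-(1+\sigma/2)}$, where the second term dominates because $\sigma\le 2$ forces $R^{-2}\le R^{-(1+\sigma/2)}$ for $R>1$; this is (iv).

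The main obstacle is that $r$ is only Lipschitz and fails to be smooth on the cut locus of $x_0$, so the formal definition $\psi_R=\chi(r/R)$ does not lie in $C_c^\infty(\mathbb{M})$ and the comparison estimate holds a priori only in the barrier/distributional sense. Overcoming this is precisely the content of the Bianchi--Setti construction: one regularizes $r$ to a genuinely smooth function $\tilde r$ satisfying $|\tilde r-r|\le C$, $|\nabla\tilde r|\le C$, and $\Delta\tilde r\le C r^{-\sigma/2}$ uniformly, and then sets $\psi_R=\chi(\tilde r/R)$. I expect this smoothing step---reconciling the distributional comparison bound across the cut locus with an honest pointwise estimate for a smooth function, uniformly in $R$---to be the delicate part, while the remaining steps are the routine chain-rule computations indicated above.
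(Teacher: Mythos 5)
There is no internal proof to compare against: the paper states this lemma as a known result and simply cites Bianchi--Setti \cite{BI} for it. Your outline does reproduce the strategy of that reference --- compose a fixed one-dimensional profile with (a regularization of) the distance function $r$, and control $\Delta r$ on the annulus $\{r\sim R\}$ by the Laplacian comparison theorem under \eqref{Ricc1}, with the model ODE analysis giving $h'(r)/h(r)\leq C r^{-\sigma/2}$ for $r\gg 1$; that part of your computation, including the observation that $\sigma\leq 2$ makes $R^{-2}\leq R^{-(1+\sigma/2)}$, is correct.

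However, as a proof the argument has a genuine gap, and it is not only the smoothness of $r$ across the cut locus. The Laplacian comparison theorem yields a \emph{one-sided} bound, $\Delta r\leq (N-1)h'(r)/h(r)$; it gives no lower bound on $\Delta r$ (distributionally $\Delta r$ carries a negative singular part on the cut locus, and a lower bound would require an upper curvature bound, which is not assumed). Since $\chi'\leq 0$ on the transition annulus, the term $R^{-1}\chi'(r/R)\,\Delta r$ is bounded from \emph{below} by $-CR^{-(1+\sigma/2)}$ but a priori not from above, so the two-sided estimate $|\Delta\psi_R|\leq CR^{-(1+\sigma/2)}$ in (iv) does not follow from the comparison theorem plus a routine smoothing. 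This two-sided control is exactly what the paper needs (the solution $u$ has no sign, as the introduction emphasizes), and producing a smooth substitute for $r$ whose Laplacian is controlled from both sides, uniformly in $R$, is the actual content of the Bianchi--Setti construction. Your proposal correctly names this as the delicate step but then defers it entirely to \cite{BI}, so what you have written is an accurate description of the known proof's architecture rather than a proof.
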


\begin{remark}\label{RK2.2}{\rm{
	Remark that, if  \eqref{Ricc1} holds for some $\sigma>2$,  	then it holds for $\sigma=2$. Hence, by Lemma \ref{L2.1}, if \eqref{Ricc1} is satisfied for some $x_0\in \mathbb{M}$, $C_0\geq 0$ and $\sigma\geq -2$, then  there exists a family of functions  $\{\psi_R\}_{R> 1}\subset C_c^\infty(\mathbb M)$  satisfying \rm{(i)--(iii)} and 
	\begin{equation}\label{pptiv}
		|\Delta \psi_R|\leq CR^{-\left(1+\frac{\min\{\sigma,2\}}{2}\right)}.
	\end{equation}
	}}
\end{remark}

The proofs of the following results can be found in \cite{JSS}. 

\begin{lemma}\label{LL2.3}
{\rm{Let $p>1$ and $x_0\in \mathbb{M}$ be such that 
$$
|B(x_0,R)|\leq a R^{b}
$$
for all $R>0$, where  $a, b>0$ are constants. Let $V$ be a measurable function such that $V^{\frac{-1}{p-1}}\in L^1_{\loc}(\mathbb{M})$ and 
$$
V(x)\geq C d^\kappa(x_0,x),\quad \mbox{a.e. in } \mathbb{M},	
$$
where $C>0$ and $\kappa\in \mathbb{R}$ are constants.  Then, we have as $R\to \infty$, 
$$
\int_{B(x_0,R)} V^{\frac{-1}{p-1}}(x)\,d\mu =O\left(\ln R+R^{\omega+b}\right),
$$ 	
where $\displaystyle \omega=-\frac{\kappa}{p-1}$.}}
\end{lemma}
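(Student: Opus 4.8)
The plan is to reduce the integral of $V^{-1/(p-1)}$ to the integral of a pure power of the distance, and then to exploit the polynomial volume bound through a dyadic decomposition into geodesic annuli. First I would record the pointwise consequence of the hypothesis on $V$: since $p>1$ the exponent $-\frac{1}{p-1}$ is negative, so raising $V(x)\geq C d^\kappa(x_0,x)$ to this power reverses the inequality and gives
$$
V^{\frac{-1}{p-1}}(x)\leq C'\, d^{\omega}(x_0,x),\qquad \omega=-\frac{\kappa}{p-1},
$$
for a.e. $x\in\mathbb{M}$, where $C'=C^{-1/(p-1)}$. Thus it suffices to estimate $\int_{B(x_0,R)} d^{\omega}(x_0,x)\,d\mu$ in the region away from $x_0$, while the contribution near $x_0$ is treated separately.

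Writing $r(x)=d(x_0,x)$, for $R\geq 1$ I would split $B(x_0,R)=B(x_0,1)\cup\big(B(x_0,R)\setminus B(x_0,1)\big)$. On the inner ball the pointwise bound above may fail to be integrable when $\omega<0$ (i.e.\ $\kappa>0$), so there I would \emph{not} use it; instead I invoke the hypothesis $V^{-1/(p-1)}\in L^1_{\loc}(\mathbb{M})$ directly, which makes $\int_{B(x_0,1)} V^{-1/(p-1)}\,d\mu$ a finite constant, hence $O(1)$. For the outer region I introduce dyadic annuli $A_j=B(x_0,2^{j+1})\setminus B(x_0,2^j)$ for $0\leq j\leq J$ with $J=\lfloor\log_2 R\rfloor$, so that $B(x_0,R)\setminus B(x_0,1)\subset\bigcup_{j=0}^{J}A_j$. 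On $A_j$ one has $2^j\leq r<2^{j+1}$, whence $r^\omega\leq 2^{|\omega|}2^{j\omega}$ irrespective of the sign of $\omega$, while the volume bound yields $|A_j|\leq|B(x_0,2^{j+1})|\leq a\,2^{b}\,2^{jb}$. Multiplying these gives
$$
\int_{A_j} V^{\frac{-1}{p-1}}\,d\mu\leq C'\int_{A_j} r^\omega\,d\mu\leq C\,2^{j(\omega+b)}.
$$

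It then remains to sum the geometric series $\sum_{j=0}^{J}2^{j(\omega+b)}$, and this is where the three regimes of the sign of $\omega+b$ enter. If $\omega+b>0$, the sum is comparable to its last term $2^{J(\omega+b)}\sim R^{\omega+b}$; if $\omega+b=0$, each summand equals $1$ and there are $J+1\sim\ln R$ of them; if $\omega+b<0$, the series converges and the sum is $O(1)$. In every case $\sum_{j=0}^{J}2^{j(\omega+b)}=O\big(\ln R+R^{\omega+b}\big)$, and adding the $O(1)$ contribution from $B(x_0,1)$ (which is absorbed, since $\ln R\to\infty$) produces the claimed estimate. I expect the computation itself to be routine; the only point genuinely requiring care is the bookkeeping near $x_0$, where the pointwise estimate $V^{-1/(p-1)}\leq C'r^\omega$ need not be integrable when $\kappa>0$, so the local-integrability hypothesis is essential and must replace the volume bound on the innermost ball. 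Everything else is a standard geometric summation.
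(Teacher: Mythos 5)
Your argument is correct and complete: the pointwise bound $V^{-1/(p-1)}\leq C'd^{\omega}(x_0,\cdot)$, the dyadic annulus decomposition combined with the volume bound, the three-way case analysis on the sign of $\omega+b$, and the use of the $L^1_{\loc}$ hypothesis (rather than the possibly non-integrable pointwise bound) on the inner ball $B(x_0,1)$ are precisely the ingredients required. Note that the paper does not reproduce a proof of this lemma but defers to the reference \cite{JSS}, where the argument is of the same standard dyadic type; your write-up matches it in substance and correctly isolates the one delicate point, namely the bookkeeping near $x_0$ when $\kappa>0$.
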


\begin{lemma}\label{LL2.4}{\rm{ Let $p>1$ and $x_0\in \mathbb{M}$ be such that 
$$
|B(x_0,R)|\leq \alpha \exp\left(\beta R^{\gamma}\right)
$$
for all $R>0$, where $\alpha,\beta,\gamma>0$ are constants. Let $V$ be a measurable function such that $V^{\frac{-1}{p-1}}\in L^1_{\loc}(\mathbb{M})$ and 
$$
V(x)\geq C d^{\kappa_1}(x_0,x)\exp\left(\kappa_2 d^{\gamma}(x_0,x)\right),\quad \mbox{a.e. in } \mathbb{M},	
$$
where  $C>0$ and $\kappa_1,\kappa_2\in \mathbb{R}$ are constants.  Assume that 
$$
\kappa_2\geq 2^{\gamma} \beta (p-1).
$$
Then, we have as $R\to \infty$, 
$$
\int_{B(x_0,R)} V^{\frac{-1}{p-1}}\,d\mu=O\left(\ln R+R^\omega\right),  
$$
where $\displaystyle \omega=-\frac{\kappa_1}{p-1}$. }}	
\end{lemma}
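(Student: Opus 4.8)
The plan is to turn the statement into an elementary estimate for the integral $\int_{B(x_0,R)}d^{\omega}(x_0,x)\exp\!\big(-\theta\, d^{\gamma}(x_0,x)\big)\,d\mu$ and then to exploit a dyadic annular decomposition, the point being that the hypothesis $\kappa_2\geq 2^{\gamma}\beta(p-1)$ is calibrated precisely so that the exponential volume growth is absorbed by the exponential decay coming from $V^{-1/(p-1)}$. First I would use the pointwise lower bound on $V$ together with the fact that $x\mapsto x^{-1/(p-1)}$ is decreasing on $(0,\infty)$ to write, for a.e.\ $x\in\mathbb{M}$,
\begin{equation*}
V^{\frac{-1}{p-1}}(x)\leq C\, d^{\omega}(x_0,x)\exp\!\big(-\theta\, d^{\gamma}(x_0,x)\big),
\end{equation*}
where $\omega=-\frac{\kappa_1}{p-1}$ and $\theta=\frac{\kappa_2}{p-1}$; note that $\kappa_2\geq 2^{\gamma}\beta(p-1)>0$ forces $\theta>0$, so the exponential factor genuinely decays. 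I would then split $\int_{B(x_0,R)}V^{-1/(p-1)}\,d\mu$ into the contribution of the fixed inner ball $B(x_0,1)$, which is finite by the assumption $V^{-1/(p-1)}\in L^1_{\loc}(\mathbb{M})$ and hence $O(\ln R+R^{\omega})$, and the contribution of $B(x_0,R)\setminus B(x_0,1)$.

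For the outer part I would fix $J$ with $2^{J}\leq R<2^{J+1}$, so that $B(x_0,R)\subset B(x_0,1)\cup\bigcup_{j=0}^{J}A_j$ with the dyadic annuli $A_j=B(x_0,2^{j+1})\setminus B(x_0,2^{j})$. On $A_j$ one has $2^{j}\leq d(x_0,x)\leq 2^{j+1}$, so by monotonicity (and since $d\geq 1$ there, which avoids any singularity of $d^{\omega}$ at the origin) the integrand is bounded by $C\,2^{j\omega}\exp(-\theta\,2^{j\gamma})$, while the volume growth hypothesis gives $|A_j|\leq|B(x_0,2^{j+1})|\leq\alpha\exp(\beta\,2^{\gamma}2^{j\gamma})$. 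Multiplying these two bounds yields
\begin{equation*}
\int_{A_j}V^{\frac{-1}{p-1}}\,d\mu\leq C\,2^{j\omega}\exp\!\Big(\big(\beta\,2^{\gamma}-\theta\big)2^{j\gamma}\Big)\leq C\,2^{j\omega},
\end{equation*}
where the last inequality is exactly where the hypothesis $\kappa_2\geq 2^{\gamma}\beta(p-1)$, i.e.\ $\beta\,2^{\gamma}-\theta\leq 0$, enters.

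It then remains to sum the geometric series $\sum_{j=0}^{J}2^{j\omega}$. If $\omega>0$ this is $\sim 2^{J\omega}\sim R^{\omega}$; if $\omega=0$ it equals $J+1\sim\ln R$; and if $\omega<0$ it is bounded by a convergent series, hence $O(1)$. In all three cases the sum is $O(\ln R+R^{\omega})$, and adding the inner-ball constant gives the claim. I expect no serious analytic obstacle here: the argument is a routine layer-cake estimate, and the only delicate point is the bookkeeping of the factor $2^{\gamma}$ produced by $(2^{j+1})^{\gamma}=2^{\gamma}2^{j\gamma}$, which is precisely what dictates the threshold $2^{\gamma}\beta(p-1)$ in the hypothesis; getting that constant right is the crux of the proof.
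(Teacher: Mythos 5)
Your argument is correct: the pointwise bound $V^{-1/(p-1)}\leq C\,d^{\omega}\exp(-\theta d^{\gamma})$ with $\theta=\kappa_2/(p-1)>0$, the separation of the inner ball via local integrability, the dyadic annular estimate absorbing the volume growth thanks to $\beta 2^{\gamma}-\theta\leq 0$, and the three-case summation of $\sum 2^{j\omega}$ all hold up. The paper itself does not prove this lemma but defers to \cite{JSS}; your dyadic decomposition is evidently the intended argument, since the factor $2^{\gamma}$ in the hypothesis $\kappa_2\geq 2^{\gamma}\beta(p-1)$ is exactly the constant produced by $(2^{j+1})^{\gamma}=2^{\gamma}2^{j\gamma}$, as you correctly identify.
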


\section{Statement of the results and discussions}\label{sec3}

We first define weak solutions to \eqref{P1}.  Let $Q=[0,\infty)\times \mathbb{M}$. We introduce the set
$$
\Psi=\left\{\psi\in C^2(Q): \psi\geq 0,\, \supp(\psi)\subset\subset Q\right\}.
$$

\begin{definition}\label{def-ws1}{\rm{
Let $p>1$, $V=V(x)>0$ a.e. in $\mathbb{M}$, $W=W(t,x)\in L^1_{\loc}(Q)$ and $u_i\in L^1_{\loc}(\mathbb{M})$, $i=0,1$. We say that $u\in L^p_{\loc}\left(Q,V\,dx\,dt\right)\cap L^1_{\loc}(Q)$ is a weak solution to \eqref{P1}, if    
\begin{eqnarray}\label{ws1}
\nonumber &&\int_{Q}|u|^p \psi V\,dx\,dt +\int_{\mathbb{M}}u_0(x)\left(\psi(0,x)-\psi_t(0,x)-\Delta \psi(0,x)\right)\,dx\\
\nonumber &&+\int_{\mathbb{M}}u_1(x)\psi(0,x)\,dx+\int_Q W\psi\,dx\,dt\\
&&\leq \int_{Q}u\left(\psi_{tt}-\Delta \psi-\psi_t+\Delta \psi_t\right)\,dx\,dt
\end{eqnarray}
 for every $\psi\in \Psi$. 	}}
\end{definition}

Notice that any smooth solution to \eqref{P1} is a weak solution in the sense of Definition \ref{def-ws1}. This can be easily seen by multiplying the inequality in \eqref{P1} by $\psi\in \Psi$ and integrating by parts over $Q$.

We first consider the case $W\equiv 0$. Our first main result  is stated in the following theorem.

\begin{theorem}\label{T3.2}{\rm{
Let  $x_0\in \mathbb{M}$ be  such that \eqref{Ricc1} holds for some $C_0\geq 0$ and $\sigma> -2$. 
Let $p>1$, $V=V(x)>0$ a.e. in $\mathbb{M}$ and $V^{\frac{-1}{p-1}}\in L^1_{\loc}(\mathbb{M})$. Suppose that $u_i\in L^1(\mathbb{M})$, $i=0,1$, and
\begin{equation}\label{cdu01}
\sum_{i=0}^1 \int_{\mathbb{M}}u_i(x)\,dx>0. 	
\end{equation}
Assume that 
\begin{equation}\label{cd-blow-up-P1}
\liminf_{R\to \infty} R^{-\frac{\left(1+\frac{\min\{\sigma,2\}}{2}\right)}{p-1}}\int_{B(x_0,R)}V^{\frac{-1}{p-1}}(x)\,dx=0.
\end{equation}
Then \eqref{P1} possesses no weak solution.
}}
\end{theorem}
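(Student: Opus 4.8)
The plan is to argue by contradiction using the nonlinear capacity (test function) method, the decisive input being the family $\{\psi_R\}$ of Lemma~\ref{L2.1} and Remark~\ref{RK2.2}, which supplies test functions with \emph{simultaneously} controlled gradient and Laplacian. Write $p':=\frac{p}{p-1}$ and $\rho:=1+\frac{\min\{\sigma,2\}}{2}$, so that Remark~\ref{RK2.2} gives $|\Delta\psi_R|\le CR^{-\rho}$, with $\rho>0$ since $\sigma>-2$. Suppose $u$ were a weak solution and substitute into \eqref{ws1} (with $W\equiv 0$) the test function $\psi(t,x)=\eta_T(t)^{\ell}\,\psi_R(x)^{m}$, where $\eta_T(t)=\eta(t/T)$, $\eta\in C^\infty$ is nonincreasing with $\eta\equiv1$ on $[0,1]$ and $\eta\equiv0$ on $[2,\infty)$ (so $\eta_T(0)=1$, $\eta_T'(0)=0$ and $|\eta_T^{(k)}|\le CT^{-k}$), and the exponents are fixed as $\ell=m=2p'$. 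Because $\eta_T(0)=1$ and $\eta_T'(0)=0$, the initial-data contribution in \eqref{ws1} reduces to $\int_{\mathbb M}u_0\bigl(\psi_R^{m}-\Delta(\psi_R^{m})\bigr)\,dx+\int_{\mathbb M}u_1\psi_R^{m}\,dx$. Since $u_i\in L^1(\mathbb M)$, $0\le\psi_R^{m}\le1$ with $\psi_R^{m}\equiv1$ on $B(x_0,R)$, and $|\Delta(\psi_R^{m})|\le CR^{-\rho}\to0$, dominated convergence shows this quantity tends to $\sum_{i=0}^{1}\int_{\mathbb M}u_i\,dx$, which is positive by \eqref{cdu01}; hence it is bounded below by a fixed $\delta_0>0$ for all $R$ large.

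Next I would move the four derivative terms to the right-hand side of \eqref{ws1} and apply Young's inequality pointwise: setting $F=\psi_{tt}-\Delta\psi-\psi_t+\Delta\psi_t$,
\[
\int_Q uF\,dx\,dt\le \tfrac12\int_Q|u|^p\psi V\,dx\,dt+C\int_Q |F|^{p'}(\psi V)^{-\frac{1}{p-1}}\,dx\,dt .
\]
Absorbing the first term on the right into the left-hand side of \eqref{ws1} and discarding the (nonnegative) remaining copy of $\int_Q|u|^p\psi V$ leaves
\[
\delta_0\le C\,I(T,R),\qquad I(T,R):=\int_Q |F|^{p'}(\psi V)^{-\frac{1}{p-1}}\,dx\,dt,
\]
so it suffices to make $I(T,R)\to0$ along a suitable sequence.

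The heart of the argument is the bookkeeping of exponents. Expanding $F$ by the product and chain rules, using $|\nabla\psi_R|\le CR^{-1}$ and $|\Delta\psi_R|\le CR^{-\rho}$ (whence $|\Delta(\psi_R^{m})|\le C\psi_R^{m-2}R^{-\rho}$, as $\rho\le2$), together with $|\eta_T^{(k)}|\le CT^{-k}$, each of the four contributions to $|F|^{p'}(\psi V)^{-1/(p-1)}$ carries powers of $\eta_T$ and of $\psi_R$. The choice $\ell=m=2p'$ is made precisely so that, after combining with $(\eta_T^{\ell}\psi_R^{m})^{-1/(p-1)}$ and using $p'-\frac{1}{p-1}=1$, all these powers are nonnegative; hence no singularity arises on $\{\psi=0\}$ and every factor $\eta_T,\psi_R\le1$ may be discarded. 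Performing the time integration over $[0,2T]$ (a factor $\lesssim T$) and the space integration over $\supp\psi_R\subset B(x_0,\gamma R)$ yields
\[
I(T,R)\le C\Bigl(T^{1-2p'}+T^{1-p'}+T\,R^{-\rho p'}+T^{1-p'}R^{-\rho p'}\Bigr)\int_{B(x_0,\gamma R)}V^{-\frac{1}{p-1}}\,dx .
\]
I would then couple the scales by setting $T=R^{\rho}$: since $\rho(1-p')=-\frac{\rho}{p-1}$ and $1-2p'\le-\frac{1}{p-1}$, each bracketed factor is dominated by $R^{-\rho/(p-1)}$ for $R\ge1$, giving $I(R^{\rho},R)\le C\,R^{-\rho/(p-1)}\int_{B(x_0,\gamma R)}V^{-1/(p-1)}\,dx$. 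The exponent $\frac{\rho}{p-1}=\frac{1+\min\{\sigma,2\}/2}{p-1}$ is exactly the one in \eqref{cd-blow-up-P1}. That condition provides a sequence $S_n\to\infty$ along which $S_n^{-\rho/(p-1)}\int_{B(x_0,S_n)}V^{-1/(p-1)}\to0$; taking $R_n=S_n/\gamma$ (so that $B(x_0,\gamma R_n)=B(x_0,S_n)$ and the fixed constant $\gamma$ is harmlessly absorbed) forces $I(R_n^{\rho},R_n)\to0$, whence $\delta_0\le0$, a contradiction.

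The main obstacle is concentrated in the test-function construction and the exponent matching. Because the weak formulation \eqref{ws1} genuinely involves $\Delta\psi$ and $\Delta\psi_t$, one cannot use radial distance functions and must rely on the Bianchi--Setti cutoffs of Lemma~\ref{L2.1} to control $\Delta\psi_R$ by $R^{-\rho}$ rather than merely $R^{-1}$; and one must locate the correct coupling $T=R^{\rho}$ so that all four derivative terms collapse onto the single capacity exponent $\frac{\rho}{p-1}$ dictated by \eqref{cd-blow-up-P1}. The remaining steps are routine applications of Young's inequality and dominated convergence.
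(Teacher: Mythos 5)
Your proposal is correct and follows essentially the same route as the paper: the a priori estimate via Young's inequality, the separated test function built from powers of a temporal cutoff and the Bianchi--Setti cutoffs $\psi_R$, the four capacity terms with the scaling $T=R^{1+\frac{\min\{\sigma,2\}}{2}}$, and dominated convergence to recover $\sum_i\int_{\mathbb M}u_i\,dx>0$ in the limit. The only cosmetic differences are that you fix the exponent $\ell=m=2p'$ where the paper keeps a generic $\iota\gg1$, and that you make explicit the rescaling $R_n=S_n/\gamma$ reconciling $B(x_0,\gamma R)$ with the ball appearing in \eqref{cd-blow-up-P1}, a point the paper passes over silently.
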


We discuss below some particular cases of Theorem \ref{T3.2}.
 
 Assume first that $V^{\frac{-1}{p-1}}\in L^1(\mathbb{M})$.   In this case, by the dominated convergence theorem, we have
$$
\lim_{R\to \infty} \int_{B(x_0,R)}		V^{\frac{-1}{p-1}}(x)\,dx=\int_{\mathbb{M}}V^{\frac{-1}{p-1}}(x)\,dx<\infty. 
$$
Hence, \eqref{cd-blow-up-P1} holds for all $p>1$, and Theorem \ref{T3.2} applies.  Consequently, we have the following result.

\begin{corollary}\label{CR3.3}{\rm{
Let  $x_0\in \mathbb{M}$ be  such that \eqref{Ricc1} holds for some $C_0\geq 0$ and $\sigma> -2$. 
Let $p>1$, $V=V(x)>0$ a.e. in $\mathbb{M}$ and $V^{\frac{-1}{p-1}}\in L^1(\mathbb{M})$. Suppose that $u_i\in L^1(\mathbb{M})$, $i=0,1$, and \eqref{cdu01} holds. Then \eqref{P1} possesses no weak solution.	}}
\end{corollary}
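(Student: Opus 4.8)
The plan is to derive the corollary directly from Theorem~\ref{T3.2} by checking that the stronger integrability hypothesis $V^{-1/(p-1)}\in L^1(\mathbb{M})$ forces the growth condition~\eqref{cd-blow-up-P1}. Every other assumption of Theorem~\ref{T3.2}---the Ricci lower bound~\eqref{Ricc1} with $\sigma>-2$, the positivity of $V$, the membership $u_i\in L^1(\mathbb{M})$, and the sign condition~\eqref{cdu01}---is already imposed in the statement of the corollary, so the only thing to verify is~\eqref{cd-blow-up-P1}.

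First I would note that, since the geodesic balls $B(x_0,R)$ exhaust $\mathbb{M}$ as $R\to\infty$, the nonnegative functions $\mathbf{1}_{B(x_0,R)}\,V^{-1/(p-1)}$ converge pointwise a.e.\ to $V^{-1/(p-1)}$ and are dominated by it. Dominated (equivalently, monotone) convergence then gives
\[
\lim_{R\to\infty}\int_{B(x_0,R)}V^{\frac{-1}{p-1}}(x)\,dx=\int_{\mathbb{M}}V^{\frac{-1}{p-1}}(x)\,dx<\infty,
\]
the finiteness being exactly the $L^1(\mathbb{M})$ hypothesis. Next I would inspect the scaling prefactor: writing $c=\frac{1+\min\{\sigma,2\}/2}{p-1}$, the assumption $\sigma>-2$ yields $1+\min\{\sigma,2\}/2>0$ and hence $c>0$, so $R^{-c}\to0$. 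Multiplying the convergent (hence bounded) integral by $R^{-c}$ shows that the limit in~\eqref{cd-blow-up-P1} exists and equals $0$; in particular the liminf is $0$. Thus all hypotheses of Theorem~\ref{T3.2} hold, and that theorem yields the nonexistence of a weak solution to~\eqref{P1}.

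Because Theorem~\ref{T3.2} is already available, there is no genuine obstacle in passing to the corollary; the argument is a one-line specialization. The only subtle point is that the exponent $c$ must be strictly positive, which is precisely what $\sigma>-2$ guarantees: at the excluded borderline $\sigma=-2$ one has $c=0$, the prefactor $R^{-c}$ no longer decays, and the liminf reduces to the (strictly positive) value $\int_{\mathbb{M}}V^{-1/(p-1)}\,dx$, so the criterion~\eqref{cd-blow-up-P1} could fail. All the real analytic work sits inside Theorem~\ref{T3.2} itself---the Bianchi--Setti cut-offs of Lemma~\ref{L2.1} and the rescaled test-function estimate---rather than in this deduction.
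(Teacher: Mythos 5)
Your proof is correct and matches the paper's own deduction: both verify \eqref{cd-blow-up-P1} by observing that the $L^1(\mathbb{M})$ hypothesis makes $\int_{B(x_0,R)}V^{-1/(p-1)}\,dx$ converge (dominated convergence), while the prefactor $R^{-\left(1+\frac{\min\{\sigma,2\}}{2}\right)/(p-1)}$ decays to $0$ because $\sigma>-2$, and then invoke Theorem~\ref{T3.2}. Your remark on the borderline $\sigma=-2$ is a nice sanity check but is not needed.
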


Assume now that \eqref{Ricc1} holds for some $C_0\geq 0$ and $\sigma\geq 2$. In this case, by volume comparison theorems (see Proposition 2.11 and Theorem 2.14 in \cite{PI}), we have 
\begin{equation}\label{vol-ba3d}
|B(x_0,R)|\leq C_{\mathbb{M}}R^\nu,\quad R>0,
\end{equation}
where $C_{\mathbb{M}}>0$ is a constant and 
\begin{equation}\label{nu-f}
\nu=(N-1)\delta_\sigma+1,\quad \delta_\sigma=\left\{\begin{array}{llll}
		\frac{1+\sqrt{1+4C_0}}{2}&\mbox{if}& \sigma=2,\\
		1&\mbox{if}& \sigma>2.	
	\end{array}
	\right.
\end{equation}
We consider potential functions $V$ satisfying $V^{\frac{-1}{p-1}}\in L^1_{\loc}(\mathbb{M})$ and 
\begin{equation}\label{V-case2}
V(x)\geq C_V d^\kappa(x_0,x),\quad \mbox{a.e. in }\mathbb{M},	
\end{equation}
where $C_V>0$ and $\kappa>-2$ are constants. Then, by Lemma \ref{LL2.3},  we have as $R\to \infty$, 
$$
\int_{B(x_0,R)}V^{\frac{-1}{p-1}}(x)\,dx=O\left(\ln R+R^{\nu-\frac{\kappa}{p-1}}\right),
$$
which yields
\begin{equation}\label{LAM}
\begin{aligned}
R^{-\frac{\left(1+\frac{\min\{\sigma,2\}}{2}\right)}{p-1}}\int_{B(x_0,R)}V^{\frac{-1}{p-1}}(x)\,dx&=R^{\frac{-2}{p-1}}\int_{B(x_0,R)}V^{\frac{-1}{p-1}}(x)\,dx\\
&=O\left(R^{\frac{-2}{p-1}}\ln R+R^{\nu-\frac{\kappa+2}{p-1}}\right).
\end{aligned}
\end{equation}
 Hence, if 
$$
\nu-\frac{\kappa+2}{p-1}<0,
$$
that is,
$$
1<p<1+\frac{\kappa+2}{\nu},
$$
then \eqref{cd-blow-up-P1} holds and Theorem \ref{T3.2} applies. Consequently, we have the following result.

\begin{corollary}\label{CR3.4}{\rm{
Let  $x_0\in \mathbb{M}$ be such that \eqref{Ricc1} holds for some $C_0\geq 0$ and $\sigma\geq 2$. Let $p>1$,  $V^{\frac{-1}{p-1}}\in L^1_{\loc}(\mathbb{M})$ and $V$ satisfies \eqref{V-case2} for some constants $C_V>0$ and $\kappa>-2$.    Suppose that $u_i\in L^1(\mathbb{M})$, $i=0,1$, and \eqref{cdu01} holds. If 
\begin{equation}\label{pknu}
p<p^*(\kappa,\nu):=1+\frac{\kappa+2}{\nu},
\end{equation}
where $\nu$ is given by \eqref{nu-f}, then  \eqref{P1} possesses no weak solution.	}}
\end{corollary}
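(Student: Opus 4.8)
The plan is to deduce the corollary directly from Theorem~\ref{T3.2} by verifying its single outstanding hypothesis, namely the limit condition \eqref{cd-blow-up-P1}; every other assumption of Theorem~\ref{T3.2} is already granted, since \eqref{Ricc1} holds with $\sigma\geq 2>-2$, we have $V>0$ a.e.\ with $V^{\frac{-1}{p-1}}\in L^1_{\loc}(\mathbb{M})$, and $u_i\in L^1(\mathbb{M})$ satisfy \eqref{cdu01}. Thus the entire argument collapses to a growth estimate for the integral $\int_{B(x_0,R)}V^{\frac{-1}{p-1}}(x)\,dx$ as $R\to\infty$.

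First I would record the geometric input. Because \eqref{Ricc1} holds with $\sigma\geq 2$, the Ricci lower bound has the form $\Ric\geq -C_0(N-1)(1+d^2(x_0,\cdot))^{-\sigma/2}$ with a decaying (or constant) right-hand side, so the volume comparison theorems (Proposition~2.11 and Theorem~2.14 in \cite{PI}) apply and yield the polynomial volume bound \eqref{vol-ba3d}, $|B(x_0,R)|\leq C_{\mathbb{M}}R^\nu$ for all $R>0$, with $\nu$ given by \eqref{nu-f}. This places us in the setting of Lemma~\ref{LL2.3} with $a=C_{\mathbb{M}}$ and $b=\nu$, while the hypothesis \eqref{V-case2} supplies the required pointwise lower bound $V(x)\geq C_V d^\kappa(x_0,x)$. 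Applying Lemma~\ref{LL2.3} therefore gives, as $R\to\infty$,
\[
\int_{B(x_0,R)}V^{\frac{-1}{p-1}}(x)\,dx = O\!\left(\ln R + R^{\,\nu-\frac{\kappa}{p-1}}\right).
\]

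The final step is the bookkeeping that turns this into \eqref{cd-blow-up-P1}. Since $\sigma\geq 2$ forces $\min\{\sigma,2\}=2$, the prefactor exponent in \eqref{cd-blow-up-P1} simplifies to $-\frac{2}{p-1}$, and multiplying the displayed estimate by $R^{-2/(p-1)}$ reproduces exactly \eqref{LAM}. I would then analyze the two resulting terms separately: $R^{-2/(p-1)}\ln R\to 0$ automatically because $p>1$, whereas $R^{\,\nu-\frac{\kappa+2}{p-1}}\to 0$ precisely when the exponent is negative, i.e.\ $\nu(p-1)<\kappa+2$, which rearranges to $p<1+\frac{\kappa+2}{\nu}=p^*(\kappa,\nu)$. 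The standing assumption $\kappa>-2$ guarantees $p^*(\kappa,\nu)>1$, so the admissible range for $p$ is nonempty. Under \eqref{pknu} both terms vanish, so in fact the full limit, and \emph{a fortiori} the $\liminf$, in \eqref{cd-blow-up-P1} equals zero.

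With \eqref{cd-blow-up-P1} verified, all hypotheses of Theorem~\ref{T3.2} are in force and the nonexistence conclusion follows at once. I do not expect a genuine obstacle here, since the corollary is a deduction rather than a fresh argument: the only nonelementary ingredient is the cited volume comparison estimate \eqref{vol-ba3d}, and the remainder is the elementary exponent comparison in \eqref{LAM}. The one point requiring a little care is the simplification $\min\{\sigma,2\}=2$, which is exactly what makes the curvature parameter $\sigma$ drop out of the critical exponent, leaving $p^*$ dependent only on $\kappa$ and the volume-growth exponent $\nu$.
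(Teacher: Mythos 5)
Your proposal is correct and follows essentially the same route as the paper: the volume comparison bound \eqref{vol-ba3d}, Lemma \ref{LL2.3} with $b=\nu$, the simplification $\min\{\sigma,2\}=2$ leading to \eqref{LAM}, and the exponent comparison $\nu-\frac{\kappa+2}{p-1}<0$ to verify \eqref{cd-blow-up-P1} before invoking Theorem \ref{T3.2}. No discrepancies to report.
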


Consider now the Euclidean case $\mathbb{M}=\mathbb{R}^N$. In this case, \eqref{Ricc1} holds for $x_0=0$ (the origin), $C_0=0$ and $\sigma=2$.  Furthermore, \eqref{vol-ba3d} holds with $\nu=N$. Hence, from Corollary \ref{CR3.4}, we deduce the following result.

\begin{corollary}\label{CR3.5}{\rm{
Let $p>1$, $V^{\frac{-1}{p-1}}\in L^1_{\loc}(\mathbb{R}^N)$ and 
$$
V(x)\geq C_V |x|^\kappa, \quad  \mbox{a.e. in }\mathbb{R}^N,
$$
where $C_V>0$ and $\kappa>-2$ are constants. Suppose that $u_i\in L^1(\mathbb{R}^N)$, $i=0,1$, and
$$
\sum_{i=0}^1 \int_{\mathbb{R}^N}u_i(x)\,dx>0. 	
$$
If 
$$
p<p^*(\kappa,N):=1+\frac{\kappa+2}{N},
$$
then  \eqref{P1} possesses no weak solution.	}}
\end{corollary}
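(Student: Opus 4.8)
The plan is to derive this statement as the specialization of Corollary \ref{CR3.4} to the flat model $\mathbb{M} = \mathbb{R}^N$, so that the whole argument reduces to checking that Euclidean space meets every hypothesis of that corollary with the correct constants. First I would record the curvature input: since $\mathbb{R}^N$ with its standard metric is flat, its Ricci tensor vanishes identically, whence $\Ric = 0 \geq -C_0(N-1)(1+d^2(x_0,x))^{-\sigma/2}$ holds trivially for $x_0 = 0$, $C_0 = 0$ and \emph{every} $\sigma$. In particular I may select $\sigma = 2$, placing us in the regime $\sigma \geq 2$ demanded by Corollary \ref{CR3.4}.

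Next I would match the remaining data. The Riemannian distance from the origin is simply $d(0,x) = |x|$, so the growth assumption $V(x) \geq C_V|x|^\kappa$ is exactly \eqref{V-case2} written with $x_0 = 0$, while the hypothesis $V^{-1/(p-1)} \in L^1_{\loc}(\mathbb{R}^N)$ and the restriction $\kappa > -2$ coincide with what is required there. The initial-data conditions transfer verbatim: $u_i \in L^1(\mathbb{R}^N)$ together with $\sum_{i=0}^1\int_{\mathbb{R}^N}u_i\,dx > 0$ is precisely \eqref{cdu01}.

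The only computation worth isolating is the value of $\nu$. Feeding $\sigma = 2$ and $C_0 = 0$ into \eqref{nu-f} gives $\delta_\sigma = \tfrac{1+\sqrt{1+4C_0}}{2} = \tfrac{1+\sqrt{1}}{2} = 1$, hence $\nu = (N-1)\cdot 1 + 1 = N$. This is consistent with the elementary fact $|B(0,R)| = c_N R^N$, so the volume bound \eqref{vol-ba3d} holds with $\nu = N$, and the abstract critical exponent $p^*(\kappa,\nu)$ collapses to $p^*(\kappa,N) = 1 + \tfrac{\kappa+2}{N}$.

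With all hypotheses of Corollary \ref{CR3.4} verified for $\mathbb{R}^N$ and $\nu = N$, its conclusion applies directly: whenever $p < 1 + \tfrac{\kappa+2}{N}$, problem \eqref{P1} admits no weak solution. Since the deduction is a pure specialization, I do not expect any genuine obstacle; the single point requiring a moment's care is the correct reading of \eqref{nu-f} at $C_0 = 0$, $\sigma = 2$, which yields $\delta_\sigma = 1$ rather than anything curvature-dependent, and hence the clean value $\nu = N$.
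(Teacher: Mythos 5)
Your proposal is correct and is essentially identical to the paper's own derivation: the paper likewise obtains Corollary \ref{CR3.5} by specializing Corollary \ref{CR3.4} to $\mathbb{M}=\mathbb{R}^N$ with $x_0=0$, $C_0=0$, $\sigma=2$, noting that \eqref{vol-ba3d} holds with $\nu=N$. Your explicit check that $\delta_\sigma=1$ in \eqref{nu-f} and hence $\nu=N$ is exactly the (implicit) computation the paper relies on.
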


We next consider the case when  \eqref{Ricc1} holds for some $C_0\geq 0$ and $-2< \sigma<2$.  In this case, by volume comparison theorems (see e.g. \cite{GRI99}), we have
\begin{equation}\label{VOL-end}
|B(x_0,R)|\leq C_{\mathbb{M}}\exp\left(\tau(N-1)R^{1-\frac{\sigma}{2}}\right),\quad R>0,
\end{equation}
where  $C_{\mathbb{M}},\tau>0$ are constants. We consider potential functions $V$ satisfying 
$V^{\frac{-1}{p-1}}\in L^1_{\loc}(\mathbb{M})$ and 
\begin{equation}\label{V-endcase}
V(x)\geq C_V d^a(x_0,x) \exp\left(\rho \, d^{1-\frac{\sigma}{2}}(x_0,x)\right), \quad \mbox{a.e. in }\mathbb{M},	
\end{equation}
where $C_V>0$, $a\in \mathbb{R}$ and $\rho\geq 2^{1-\frac{\sigma}{2}}\tau(N-1)(p-1)$ are constants. 
In this case, by \eqref{VOL-end}, \eqref{V-endcase} and Lemma \ref{LL2.4}, we have  as $R\to \infty$, 
$$
\int_{B(x_0,R)}V^{\frac{-1}{p-1}}(x)\,dx=O\left(\ln R+R^{\frac{-a}{p-1}}\right),
$$
which yields
$$
\begin{aligned}
R^{-\frac{\left(1+\frac{\min\{\sigma,2\}}{2}\right)}{p-1}}\int_{B(x_0,R)}V^{\frac{-1}{p-1}}(x)\,dx&=R^{-\frac{\left(1+\frac{\sigma}{2}\right)}{p-1}}\int_{B(x_0,R)}V^{\frac{-1}{p-1}}(x)\,dx\\
&=O\left(R^{-\frac{1+\frac{\sigma}{2}}{p-1}}\ln R+R^{-\frac{1+\frac{\sigma}{2}+a}{p-1}}\right).
\end{aligned}
$$
So, if 
$$
1+\frac{\sigma}{2}+a>0,
$$
then \eqref{cd-blow-up-P1} holds and Theorem \ref{T3.2} applies. Therefore, we deduce the following result. 

\begin{corollary}\label{CR.3.7}{\rm{
 Let $x_0\in \mathbb{M}$ be such that \eqref{Ricc1} holds for some $C_0\geq 0$ and $-2<\sigma<2$.  Let $p>1$,  $V^{\frac{-1}{p-1}}\in L^1_{\loc}(\mathbb{M})$ and $V$ satisfies \eqref{V-endcase} for some constants $C_V>0$, $a>-\left(1+\frac{\sigma}{2}\right)$ and $\rho\geq 2^{1-\frac{\sigma}{2}}\tau(N-1)(p-1)$, where $\tau>0$ is given by \eqref{VOL-end}. If $u_i\in L^1(\mathbb{M})$, $i=0,1$, and \eqref{cdu01} holds, then \eqref{P1} possesses no weak solution.	}}
\end{corollary}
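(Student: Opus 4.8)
The plan is to derive this corollary directly from Theorem~\ref{T3.2}, by verifying that the assumed exponential volume growth of $\mathbb{M}$ together with the exponential lower bound on $V$ force the limit condition \eqref{cd-blow-up-P1} to hold. Since $-2<\sigma<2$ we have $\min\{\sigma,2\}=\sigma$, so the exponent appearing in \eqref{cd-blow-up-P1} simplifies to $\big(1+\tfrac{\sigma}{2}\big)/(p-1)$, and the whole task reduces to estimating $\int_{B(x_0,R)}V^{-1/(p-1)}(x)\,dx$ as $R\to\infty$ and multiplying by the correct power of $R$.

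First I would invoke the volume comparison estimate \eqref{VOL-end}, valid for $-2<\sigma<2$ under the Ricci bound \eqref{Ricc1}, which gives the exponential growth $|B(x_0,R)|\leq C_{\mathbb{M}}\exp\!\big(\tau(N-1)R^{1-\sigma/2}\big)$. I would then feed this into Lemma~\ref{LL2.4} with the identifications $\gamma=1-\tfrac{\sigma}{2}$, $\beta=\tau(N-1)$, $\alpha=C_{\mathbb{M}}$. The lower bound \eqref{V-endcase} is already of the exact shape required by that lemma, with $\kappa_1=a$ and $\kappa_2=\rho$, and its compatibility hypothesis $\kappa_2\geq 2^{\gamma}\beta(p-1)$ reads $\rho\geq 2^{1-\sigma/2}\tau(N-1)(p-1)$, which is precisely the standing assumption on $\rho$. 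Applying Lemma~\ref{LL2.4} thus yields, as $R\to\infty$,
$$
\int_{B(x_0,R)}V^{-1/(p-1)}(x)\,dx=O\!\left(\ln R+R^{-a/(p-1)}\right).
$$

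With this in hand I would multiply by $R^{-(1+\sigma/2)/(p-1)}$ and check that both resulting terms vanish, using
$$
R^{-\frac{1+\sigma/2}{p-1}}\int_{B(x_0,R)}V^{-1/(p-1)}(x)\,dx=O\!\left(R^{-\frac{1+\sigma/2}{p-1}}\ln R+R^{-\frac{1+\sigma/2+a}{p-1}}\right).
$$
The first term tends to $0$ because $1+\tfrac{\sigma}{2}>0$ (from $\sigma>-2$), so the polynomial decay beats the logarithm; the second tends to $0$ exactly when $1+\tfrac{\sigma}{2}+a>0$, i.e. $a>-\big(1+\tfrac{\sigma}{2}\big)$, which is the hypothesis on $a$. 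Hence the $\liminf$ in \eqref{cd-blow-up-P1} equals $0$, and since the remaining hypotheses of Theorem~\ref{T3.2} (the Ricci bound \eqref{Ricc1}, $V>0$ a.e., $V^{-1/(p-1)}\in L^1_{\loc}(\mathbb{M})$, $u_i\in L^1(\mathbb{M})$, and \eqref{cdu01}) are assumed, the theorem applies and \eqref{P1} admits no weak solution.

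The main obstacle is not conceptual but a matter of careful bookkeeping: I must confirm that the exponential growth rate $\beta=\tau(N-1)$ from the volume comparison and the exponential decay rate $\kappa_2=\rho$ of the potential are correctly reconciled through the threshold $2^{\gamma}\beta(p-1)$ of Lemma~\ref{LL2.4}. This is the single place where the $(p-1)$-scaling of $V^{-1/(p-1)}$ interacts with the geometric volume growth, and if $\rho$ were below this threshold the integral would grow exponentially and the whole argument would collapse; everything else is a routine matching of parameters among Theorem~\ref{T3.2}, Lemma~\ref{LL2.4}, and \eqref{VOL-end}.
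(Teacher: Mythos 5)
Your proposal is correct and follows essentially the same route as the paper: both invoke the volume bound \eqref{VOL-end}, apply Lemma~\ref{LL2.4} with exactly the parameter identifications you list (so that the hypothesis $\rho\geq 2^{1-\frac{\sigma}{2}}\tau(N-1)(p-1)$ matches the lemma's threshold $\kappa_2\geq 2^{\gamma}\beta(p-1)$), and then multiply by $R^{-\left(1+\frac{\sigma}{2}\right)/(p-1)}$ to verify \eqref{cd-blow-up-P1} under $a>-\left(1+\frac{\sigma}{2}\right)$ before applying Theorem~\ref{T3.2}. No gaps.
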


We next consider the case $W\geq 0$ a.e. in $Q$ and $W\not\equiv 0$. Our second main result is stated below.

\begin{theorem}\label{T3.9}{\rm{
Let $x_0\in \mathbb{M}$ be such that \eqref{Ricc1} holds for some $C_0\geq 0$ and $\sigma> -2$.
Let $p>1$, $V=V(x)>0$ a.e. in $\mathbb{M}$, $V^{\frac{-1}{p-1}}\in L^1_{\loc}(\mathbb{M})$, $W=W(t,x)\in L^1_{\loc}(Q)$, $W\geq 0$ a.e. in $Q$ and $W\not\equiv 0$.   Let $u_0\in L^1(\mathbb{M})$, $u_1\in L^1_{\loc}(\mathbb{M})$ and 
\begin{equation}\label{u01-cdP2}
(u_0+u_1)(x)\geq 0,\quad \mbox{a.e. in }\mathbb{M}.	
\end{equation}
Assume that there exists $\gamma>\gamma_0$ ($\gamma_0>1$ is provided by Lemma \ref{L2.1}) such that 
\begin{equation}\label{cd2-blowup-P2}
\liminf_{R\to \infty} R^{-\frac{\left(1+\frac{\min\{\sigma,2\}}{2}\right)}{p-1}}\left(\int_{B(x_0,\gamma R)} V^{\frac{-1}{p-1}}(x)\,dx\right)\left(\int_{0}^{\frac{R^{1+\frac{\min\{\sigma,2\}}{2}}}{2}}\int_{B(x_0,R)} W(t,x)\,dx\,dt\right)^{-1}=0.
\end{equation}
Then \eqref{P1} possesses no weak solution.
}}
\end{theorem}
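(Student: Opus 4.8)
The plan is to argue by contradiction using the rescaled test function (nonlinear capacity) method, with the cut-off functions of Lemma \ref{L2.1} (and Remark \ref{RK2.2}) supplying the spatial part. Write $\theta:=1+\frac{\min\{\sigma,2\}}{2}$ and $p'=\frac{p}{p-1}$, and suppose a weak solution $u$ exists. First I would fix a smooth time cut-off $\Phi:[0,\infty)\to[0,1]$ with $\Phi\equiv 1$ on $[0,\tfrac12]$, $\Phi\equiv 0$ on $[1,\infty)$ and $\Phi'(0)=0$, set $\Phi_T(t)=\Phi(t/T)$ with the crucial scaling $T=R^{\theta}$, and take as test function
$$
\psi(t,x)=\Phi_T^{\ell}(t)\,\psi_R^{m}(x),
$$
with exponents $\ell,m$ to be chosen large (depending only on $p$). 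Since $\psi_R\in C_c^\infty(\mathbb{M})$, we have $\psi\in\Psi$ with $\supp(\psi)\subset[0,T]\times B(x_0,\gamma R)$, and $\psi_t(0,\cdot)=0$ because $\Phi'(0)=0$.

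Next I would insert $\psi$ into the weak formulation \eqref{ws1}. Using $\psi_t(0,\cdot)=0$, $\psi(0,\cdot)=\psi_R^{m}$, hypothesis \eqref{u01-cdP2} and $\psi\ge0$, the initial-data contribution $\int_{\mathbb M}(u_0+u_1)\psi_R^m\,dx$ is nonnegative and may be dropped, while $\big|\int_{\mathbb M}u_0\,\Delta(\psi_R^m)\,dx\big|\le C\|u_0\|_{L^1}R^{-\theta}$ by property \rm{(iv)}/\eqref{pptiv}. This leaves
$$
\int_Q|u|^p\psi V\,dx\,dt+\int_Q W\psi\,dx\,dt\le \int_Q u\big(\psi_{tt}-\Delta\psi-\psi_t+\Delta\psi_t\big)\,dx\,dt+CR^{-\theta}.
$$
To each of the four terms on the right I apply Young's inequality $|u|\,|A|\le \varepsilon\,|u|^p\psi V+C_\varepsilon|A|^{p'}(\psi V)^{-\frac{1}{p-1}}$, choosing $\varepsilon$ so that the resulting multiples of $\int_Q|u|^p\psi V$ sum to at most $\tfrac12$; absorbing these and discarding the remaining (nonnegative) $\tfrac12\int_Q|u|^p\psi V$ gives
$$
\int_Q W\psi\,dx\,dt\le C\sum_{A}\int_Q |A|^{p'}(\psi V)^{-\frac{1}{p-1}}\,dx\,dt+CR^{-\theta},
$$
with $A$ ranging over $\psi_{tt},\Delta\psi,\psi_t,\Delta\psi_t$.

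The computational core, which I expect to be the main obstacle, is estimating the four capacity integrals. Differentiating $\psi$ and using $|\nabla\psi_R|\le CR^{-1}$, \eqref{pptiv}, $\psi_R\le1$, $\Phi_T\le1$, the identity $p'-\frac1{p-1}=1$ ensures that for $\ell,m$ large the powers of $\Phi_T$ and $\psi_R$ left after division by $(\psi V)^{\frac1{p-1}}$ are nonnegative, so those factors are bounded by $1$. A direct bookkeeping then shows, with $T=R^\theta$, that the $\Delta\psi$ and $\psi_t$ terms each contribute $C\,R^{-\frac{\theta}{p-1}}\int_{B(x_0,\gamma R)}V^{-\frac1{p-1}}\,dx$ (the space factor $R^{-\theta p'}$ from \eqref{pptiv} combined with the time factor $\int_0^T\Phi_T^\ell\sim T$, respectively $T^{-p'}\cdot T$, producing $R^{\theta(1-p')}=R^{-\theta/(p-1)}$), whereas the $\psi_{tt}$ and $\Delta\psi_t$ terms scale like $R^{\theta(1-2p')}\int_{B(x_0,\gamma R)}V^{-\frac1{p-1}}$ and are of strictly lower order. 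The delicacy is exactly to pick $\ell,m$ and the scaling $T=R^\theta$ so that all four terms are simultaneously controlled and the dominant ones reproduce precisely the product in \eqref{cd2-blowup-P2}; this is where the Ricci bound \eqref{Ricc1} enters, through \eqref{pptiv} fixing $\theta$. Collecting, the right-hand side is $\le C\,R^{-\frac{\theta}{p-1}}\int_{B(x_0,\gamma R)}V^{-\frac1{p-1}}\,dx+CR^{-\theta}$.

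Finally, since $\Phi_T\equiv1$ on $[0,T/2]$, $\psi_R\equiv1$ on $B(x_0,R)$ and $W\ge0$, the left-hand side obeys $\int_Q W\psi\ge\int_0^{R^\theta/2}\int_{B(x_0,R)}W\,dx\,dt$. Because $W\not\equiv0$, monotone convergence gives $\int_0^{R^\theta/2}\int_{B(x_0,R)}W\,dx\,dt\to\int_Q W>0$, so this quantity is bounded below by a positive constant for $R\gg1$, and in particular $R^{-\theta}$ divided by it tends to $0$. Dividing the combined inequality by $\int_0^{R^\theta/2}\int_{B(x_0,R)}W$ and taking $\liminf_{R\to\infty}$, the first quotient tends to $0$ along a suitable sequence by \eqref{cd2-blowup-P2} and the second tends to $0$, forcing $1\le0$. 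This contradiction proves that \eqref{P1} admits no weak solution.
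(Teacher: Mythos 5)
Your proposal is correct and follows essentially the same route as the paper's proof: the same test function $\xi_T^{\iota}(t)\psi_R^{\iota}(x)$ built from the Bianchi--Setti cut-offs, the same Young-inequality absorption, the same scaling $T=R^{1+\frac{\min\{\sigma,2\}}{2}}$ identifying $A_2$ and $A_3$ as the dominant capacity terms, the same lower bound $\int_Q W\psi\geq\int_0^{T/2}\int_{B(x_0,R)}W$, and the same final step of dividing by this (eventually positive, bounded-below) quantity and passing to the liminf via \eqref{cd2-blowup-P2} to reach $1\leq 0$. The only cosmetic difference is that you bound the $u_0$-Laplacian term by $C\|u_0\|_{L^1(\mathbb{M})}R^{-\theta}$ outright, while the paper sends $R^{-\theta}\int_{B(x_0,\gamma R)\setminus B(x_0,R)}|u_0|\,dx$ to zero by dominated convergence; both suffice.
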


\begin{remark}\label{RK3.9}{\rm{
It is interesting to observe that, if $W\in L^1(Q)$, then conditions \eqref{cd-blow-up-P1} and \eqref{cd2-blowup-P2} are equivalent, which means that the inhomogeneous term $W$ has no effect on the nonexistence of a weak solution. }}
\end{remark}

We discuss below some particular cases of Theorem \ref{T3.9}.  Let 
$$
W(t,x)=f(t)g(x),\quad \mbox{a.e. in }Q, 
$$
where $f\in L^1_{\loc}([0,\infty))$, $f\geq 0$ a.e. in $(0,\infty)$, $f\not\equiv 0$,  $g\in L^1_{\loc}(\mathbb{M})$, $g\geq 0$ a.e. in $\mathbb{M}$ and $g\not\equiv 0$.  Assume that we have as $R\to \infty$, 
\begin{equation}\label{cdWW}
\left(\int_0^R f(t)\,dt\right)^{-1}\sim	R^\lambda
\end{equation}
and
\begin{equation}\label{cdWW2}
\left(\int_{B(x_0,R)} g(x)\,dx\right)^{-1}\sim	R^\eta,
\end{equation}
where $\lambda,\eta\in \RR$ are constants.  This implies that 
\begin{equation}\label{cdWW3}
\left(\int_{0}^{\frac{R^{1+\frac{\min\{\sigma,2\}}{2}}}{2}}\int_{B(x_0,R)} W(t,x)\,dx\,dt\right)^{-1}\sim	 R^{\lambda\left(1+\frac{\min\{\sigma,2\}}{2}\right)+\eta}.
\end{equation}
Notice that, if $$\lambda\left(1+\frac{\min\{\sigma,2\}}{2}\right)+\eta> 0,$$ then $W \equiv 0.$
If $$\lambda\left(1+\frac{\min\{\sigma,2\}}{2}\right)+\eta=0,$$ then $W\in L^1(Q).$ In this case, by Remark \ref{RK3.9}, conditions \eqref{cd-blow-up-P1} and \eqref{cd2-blowup-P2} are equivalent. So, we will only discuss the case
\begin{equation}\label{MOUHIM}
\lambda\left(1+\frac{\min\{\sigma,2\}}{2}\right)+\eta<0. 
\end{equation}

Assume now that \eqref{Ricc1} holds for some $C_0\geq 0$ and $\sigma\geq 2$. We consider potential functions $V$ satisfying $V^{\frac{-1}{p-1}}\in L^1_{\loc}(\mathbb{M})$ and 
\eqref{V-case2}, where $C_V>0$ and $\kappa\in \RR$ are constants. In view of \eqref{LAM} and \eqref{cdWW3},  for all $\gamma>\gamma_0$, we obtain as $R\to \infty$,
$$
\begin{aligned}
&R^{-\frac{\left(1+\frac{\min\{\sigma,2\}}{2}\right)}{p-1}}\left(\int_{B(x_0,\gamma R)} V^{\frac{-1}{p-1}}(x)\,dx\right)\left(\int_{0}^{\frac{R^{1+\frac{\min\{\sigma,2\}}{2}}}{2}}\int_{B(x_0,R)} W(t,x)\,dx\,dt\right)^{-1}\\
&=O\left(R^{\frac{-2}{p-1}+2\lambda+\eta}\ln R+R^{2\lambda+\eta+\nu-\frac{\kappa+2}{p-1}}\right),
\end{aligned}
$$
where $\nu$ is given by \eqref{nu-f}.  Then, by \eqref{MOUHIM}, we deduce that \eqref{cd2-blowup-P2} holds provided that 
$$
2\lambda+\eta+\nu-\frac{\kappa+2}{p-1}<0,
$$
that is,
$$
(2\lambda+\eta+\nu)(p-1)<\kappa+2. 
$$
Then, by Theorem \ref{T3.9}, we obtain the following result. 

\begin{corollary}\label{CR3.10}{\rm{
Let $x_0\in \mathbb{M}$ be such that \eqref{Ricc1} holds for some $C_0\geq 0$ and $\sigma\geq 2$.
Let $p>1$,  $V^{\frac{-1}{p-1}}\in L^1_{\loc}(\mathbb{M})$ and $V$ satisfies \eqref{V-case2} for some constants $C_V>0$ and $\kappa\in \RR$.  Let $$W(t,x)=f(t)g(x)\,\,\,\, \text{a.e. in} \,\,\,Q,$$ where  $f\in L^1_{\loc}([0,\infty))$, $f\geq 0$ a.e. in $(0,\infty)$, $f\not\equiv 0$,  $g\in L^1_{\loc}(\mathbb{M})$, $g\geq 0$ a.e. in $\mathbb{M}$ and $g\not\equiv 0$. Assume that $f$ and $g$ satisfy \eqref{cdWW} and \eqref{cdWW2} with
\begin{equation}\label{inter}
2\lambda+\eta<0,\quad (2\lambda+\eta+\nu)(p-1)<\kappa+2,
\end{equation}
where $\nu$ is given by \eqref{nu-f}. If $u_0\in L^1(\mathbb{M})$ and $u_1\in L^1_{\loc}(\mathbb{M})$ satisfy \eqref{u01-cdP2}, then \eqref{P1} possesses no weak solution. 
}}
\end{corollary}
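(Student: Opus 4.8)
The plan is to deduce the corollary directly from Theorem \ref{T3.9} by checking that each of its hypotheses holds; the only nontrivial one is the blow-up condition \eqref{cd2-blowup-P2}, and everything else is structural. Since $\sigma\geq 2>-2$, the curvature bound \eqref{Ricc1} is of the required type. The assumptions $p>1$, $V>0$ a.e.\ and $V^{\frac{-1}{p-1}}\in L^1_{\loc}(\mathbb{M})$ are given outright. From the factored form $W=fg$ with $f,g\geq 0$, $f,g\not\equiv 0$ locally integrable, one reads off $W\geq 0$ a.e., $W\not\equiv 0$, and $W\in L^1_{\loc}(Q)$ (on a compact cylinder $[0,T]\times K$ the integral splits as $(\int_0^T f\,dt)(\int_K g\,dx)<\infty$). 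The initial data satisfy \eqref{u01-cdP2} by assumption. Thus the entire proof reduces to establishing \eqref{cd2-blowup-P2} for some admissible $\gamma>\gamma_0$.

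For this I would first exploit $\sigma\geq 2$, so that $\min\{\sigma,2\}=2$ and the scaling exponent is $1+\frac{\min\{\sigma,2\}}{2}=2$; the prefactor in \eqref{cd2-blowup-P2} becomes $R^{-\frac{2}{p-1}}$ and the time integral runs over $[0,R^2/2]$. Then I would estimate the three factors separately. For the potential factor, the volume comparison bound \eqref{vol-ba3d} with $\nu$ as in \eqref{nu-f} is available, so Lemma \ref{LL2.3} applied on $B(x_0,\gamma R)$ yields $\int_{B(x_0,\gamma R)}V^{\frac{-1}{p-1}}\,dx=O\!\left(\ln R+R^{\nu-\frac{\kappa}{p-1}}\right)$, the fixed dilation $\gamma$ changing only multiplicative constants. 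For the inhomogeneous factor I would use $W=fg$ to split the space-time integral as $\left(\int_0^{R^2/2}f\,dt\right)\left(\int_{B(x_0,R)}g\,dx\right)$ and invoke \eqref{cdWW} with $T=R^2/2$ together with \eqref{cdWW2}, obtaining an inverse of order $R^{2\lambda+\eta}$---this is precisely \eqref{cdWW3} specialized to $\sigma\geq 2$, the harmless factor $\tfrac12$ only rescaling the leading term.

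Multiplying these estimates (equivalently, combining \eqref{LAM} with \eqref{cdWW3}) would give, as $R\to\infty$,
\[
O\!\left(R^{-\frac{2}{p-1}+2\lambda+\eta}\ln R+R^{\,2\lambda+\eta+\nu-\frac{\kappa+2}{p-1}}\right).
\]
To finish I would invoke the two inequalities in \eqref{inter}. The first, $2\lambda+\eta<0$ (which is exactly \eqref{MOUHIM} in the present case), makes the exponent $-\frac{2}{p-1}+2\lambda+\eta$ strictly negative and so extinguishes the logarithmic term. The second, $(2\lambda+\eta+\nu)(p-1)<\kappa+2$, is literally the statement that $2\lambda+\eta+\nu-\frac{\kappa+2}{p-1}<0$, extinguishing the polynomial term. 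Hence the product tends to $0$, the $\liminf$ in \eqref{cd2-blowup-P2} vanishes, and Theorem \ref{T3.9} delivers the nonexistence of a weak solution.

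I do not anticipate a genuine obstacle, since Theorem \ref{T3.9} carries the analytic weight and the corollary is a verification of hypotheses. The only care required is bookkeeping: confirming that substituting $T=R^2/2$ and enlarging the ball to radius $\gamma R$ leave the leading exponents untouched, and that the optional logarithmic correction never spoils the limit. The latter is automatic, because the first condition of \eqref{inter} forces the relevant exponent to be strictly negative, so the lower-order $\ln R$ growth is irrelevant.
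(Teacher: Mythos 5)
Your proposal is correct and follows essentially the same route as the paper: the corollary is reduced to verifying \eqref{cd2-blowup-P2}, which is done by combining the potential estimate \eqref{LAM} (via Lemma \ref{LL2.3} and the volume bound \eqref{vol-ba3d}) with the product estimate \eqref{cdWW3} for $W=fg$, after which the first condition in \eqref{inter} kills the logarithmic term and the second kills the polynomial term, so Theorem \ref{T3.9} applies.
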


\begin{remark}\label{RK3.11}{\rm{
Let us consider the special case of Corollary \ref{CR3.10}, where $\kappa>-2$. In this case, \eqref{inter} is equivalent to 
\begin{equation}\label{HAMD}
0<\nu\leq -(2\lambda+\eta)\quad\mbox{or}\quad \nu> -(2\lambda+\eta)>0,\,\, p<1+\frac{\kappa+2}{2\lambda+\eta+\nu}.
\end{equation} 
Setting
$$
p^*(\kappa,\nu,\lambda,\eta)=\left\{\begin{array}{llll}
\infty &\mbox{if}& 	0<\nu\leq -(2\lambda+\eta),\\
1+\displaystyle\frac{\kappa+2}{2\lambda+\eta+\nu} &\mbox{if}&  \nu> -(2\lambda+\eta)>0,
\end{array}
\right.
$$
\eqref{HAMD} can be written in the form
$$
p<p^*(\kappa,\nu,\lambda,\eta). 
$$
Observe that $$p^*(\kappa,\nu,\lambda,\eta)>p^*(\kappa,\nu),$$ where $p^*(\kappa,\nu)$ is given by \eqref{pknu} ($p^*(\kappa,\nu)$ corresponds to the homogeneous case $W\equiv 0$). This shows that the presence of the inhomogeneous term $W=W(t,x)$  leads to the enlargement of the range of $p$ for which \eqref{P1} possesses no weak solution.}}
\end{remark}

We now consider the special case of  Corollary \ref{CR3.10}, where $\mathbb{M}=\RR^N$. We recall that in this case, \eqref{Ricc1} holds for $x_0=0$ (the origin), $C_0=0$ and $\sigma=2$.  Furthermore, \eqref{vol-ba3d} holds with $\nu=N$. From Corollary \ref{CR3.10} and Remark \ref{RK3.11}, we deduce the following result. 

\begin{corollary}\label{CR3.12}{\rm{
Let $p>1$, $V^{\frac{-1}{p-1}}\in L^1_{\loc}(\mathbb{R}^N)$ and 
$$
V(x)\geq C_V |x|^\kappa, \quad  \mbox{a.e. in }\mathbb{R}^N,
$$
where $C_V>0$ and $\kappa>-2$ are constants. \\
Let $$W(t,x)=f(t)g(x)\,\,\,\, \text{a.e. in} \,\,\,\,Q,$$ where  $f\in L^1_{\loc}([0,\infty))$, $f\geq 0$ a.e. in $(0,\infty)$, $f\not\equiv 0$,  $g\in L^1_{\loc}(\RR^N)$, $g\geq 0$ a.e. in $\mathbb{R}^N$ and $g\not\equiv 0$. Assume that $f$ and $g$ satisfy \eqref{cdWW} and \eqref{cdWW2} with $2\lambda+\eta<0$. Let $u_0\in L^1(\mathbb{R}^N)$, $u_1\in L^1_{\loc}(\RR^N)$ and 
\begin{equation}\label{uu0uu1}
(u_0+u_1)(x)\geq 0,\quad \mbox{a.e. in }\mathbb{R}^N.	
\end{equation}
If $p<p^*(\kappa,N,\lambda,\eta)$, where 
$$
p^*(\kappa,N,\lambda,\eta)=\left\{\begin{array}{llll}
\infty &\mbox{if}& 	N\leq -(2\lambda+\eta),\\
1+\displaystyle\frac{\kappa+2}{2\lambda+\eta+N} &\mbox{if}&  N> -(2\lambda+\eta),
\end{array}
\right.
$$
then \eqref{P1} possesses no weak solution.
}}	
\end{corollary}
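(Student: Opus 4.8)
The plan is to obtain Corollary \ref{CR3.12} as the Euclidean specialization of Corollary \ref{CR3.10}; there is essentially no new analysis to perform, only a verification that flat space $\mathbb{R}^N$ fits into the framework of the latter. First I would check the geometric hypothesis. The standard metric on $\mathbb{R}^N$ is flat, so its Ricci tensor vanishes identically; hence \eqref{Ricc1} holds trivially with $x_0=0$, $C_0=0$, and $\sigma=2$ (in fact for every $\sigma$, but $\sigma=2$ is the value that places us in the regime $\sigma\geq 2$ required by Corollary \ref{CR3.10}). With these choices, formula \eqref{nu-f} gives $\delta_2=\frac{1+\sqrt{1+4\cdot 0}}{2}=1$, so that $\nu=(N-1)\cdot 1+1=N$, consistent with the exact volume growth $|B(0,R)|=c_N R^N$.

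Next I would match the analytic hypotheses. Since the geodesic distance on $\mathbb{R}^N$ is $d(0,x)=|x|$, the assumption $V(x)\geq C_V|x|^\kappa$ is exactly condition \eqref{V-case2}; the requirement $V^{\frac{-1}{p-1}}\in L^1_{\loc}$ and the product structure $W=fg$ with the scaling relations \eqref{cdWW} and \eqref{cdWW2} are imposed verbatim. The initial-data condition \eqref{uu0uu1} coincides with \eqref{u01-cdP2}, and the standing sign assumption $2\lambda+\eta<0$ is precisely the first inequality in \eqref{inter}. Thus all hypotheses of Corollary \ref{CR3.10} are met with $\nu=N$.

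It then remains to translate the exponent condition. By Corollary \ref{CR3.10}, nonexistence of a weak solution follows once \eqref{inter} holds, i.e.\ once $2\lambda+\eta<0$ and $(2\lambda+\eta+N)(p-1)<\kappa+2$. Since $\kappa>-2$, I would invoke Remark \ref{RK3.11}, which recasts \eqref{inter} (with $\nu=N$) in the equivalent form $p<p^*(\kappa,N,\lambda,\eta)$, where the two branches of $p^*$ correspond exactly to the dichotomy $N\leq-(2\lambda+\eta)$ versus $N>-(2\lambda+\eta)>0$ appearing in \eqref{HAMD}. This is precisely the threshold in the statement, and the proof is complete.

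The one point deserving care, rather than a genuine obstacle, is the bookkeeping in this last step: one must confirm that substituting $\nu=N$ into \eqref{HAMD} reproduces both cases of the piecewise definition of $p^*(\kappa,N,\lambda,\eta)$, and in particular that the strict inequality $2\lambda+\eta<0$ assumed here guarantees $-(2\lambda+\eta)>0$, so that the second branch $N>-(2\lambda+\eta)>0$ is the correct transcription. No new estimate on the manifold is needed, since the heavy lifting---the nonlinear capacity argument together with the Bianchi--Setti cut-off functions of Lemma \ref{L2.1}---has already been carried out in the proof of Theorem \ref{T3.9} underlying Corollary \ref{CR3.10}.
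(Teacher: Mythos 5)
Your proposal is correct and follows exactly the paper's own route: specialize Corollary \ref{CR3.10} to $\mathbb{M}=\mathbb{R}^N$ with $x_0=0$, $C_0=0$, $\sigma=2$ (hence $\nu=N$ by \eqref{nu-f}), and then use Remark \ref{RK3.11} (applicable since $\kappa>-2$) to rewrite \eqref{inter} as $p<p^*(\kappa,N,\lambda,\eta)$. Nothing further is needed.
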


We next consider the particular case of Corollary \ref{CR3.12}, where $g\in L^1(\RR^N)$ and 
$$
f(t)=t^\zeta,\quad t>0
$$
for some constant $\zeta>-1$. In this case, for all $R>0$, we have
$$
\int_0^R f(t)\,dt=C R^{\zeta+1},
$$
 which shows that \eqref{cdWW} holds with $\lambda=-(\zeta+1)$.   Furthermore,  $g\in L^1(\RR^N)$ implies that \eqref{cdWW2} holds with $\eta=0$. Hence, from Corollary \ref{CR3.12}, we deduce the following result.

\begin{corollary}\label{CR3.13}{\rm{
Let $p>1$, $V^{\frac{-1}{p-1}}\in L^1_{\loc}(\mathbb{R}^N)$ and 
$$
V(x)\geq C_V |x|^\kappa, \quad  \mbox{a.e. in }\mathbb{R}^N,
$$
where $C_V>0$ and $\kappa>-2$ are constants. \\
Let $$W(t,x)=t^\zeta g(x)\,\,\,\, \text{a.e. in}\,\,\,\, Q,$$ where  $\zeta>-1$,  $g\in L^1(\RR^N)$, $g\geq 0$ a.e. in $\mathbb{R}^N$ and $g\not\equiv 0$.  Let $u_0\in L^1(\mathbb{R}^N)$ and $u_1\in L^1_{\loc}(\RR^N)$ satisfy \eqref{uu0uu1}.  
If $p<p^*(\kappa,N,\zeta)$, where 
$$
p^*(\kappa,N,\zeta)=\left\{\begin{array}{llll}
\infty &\mbox{if}& 	N\leq 2(\zeta+1),\\
1+\displaystyle\frac{\kappa+2}{N-2(\zeta+1)} &\mbox{if}&  N> 2(\zeta+1),
\end{array}
\right.
$$
then \eqref{P1} possesses no weak solution.
}}	
\end{corollary}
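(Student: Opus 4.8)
The plan is to derive Corollary~\ref{CR3.13} directly from Corollary~\ref{CR3.12}, so the whole argument reduces to identifying the scaling exponents $\lambda$ and $\eta$ attached to the given $f$ and $g$, checking the sign condition $2\lambda+\eta<0$, and then substituting these values into the critical exponent $p^*(\kappa,N,\lambda,\eta)$.

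First I would handle the time factor. Since $f(t)=t^\zeta$ with $\zeta>-1$, the function $f$ is locally integrable on $[0,\infty)$ and $\int_0^R t^\zeta\,dt=\frac{R^{\zeta+1}}{\zeta+1}$ for every $R>0$; hence $\left(\int_0^R f(t)\,dt\right)^{-1}=(\zeta+1)R^{-(\zeta+1)}\sim R^{-(\zeta+1)}$, so that \eqref{cdWW} holds with $\lambda=-(\zeta+1)$. Next I would treat the spatial factor. Because $g\in L^1(\mathbb{R}^N)$ with $g\geq 0$ and $g\not\equiv 0$, monotone convergence yields $\int_{B(0,R)}g(x)\,dx\uparrow\int_{\mathbb{R}^N}g(x)\,dx=:c_g\in(0,\infty)$ as $R\to\infty$. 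Consequently $\left(\int_{B(0,R)}g(x)\,dx\right)^{-1}$ stays between two positive constants for $R\gg 1$, i.e.\ it behaves like $R^0$, so \eqref{cdWW2} holds with $\eta=0$.

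With $\lambda=-(\zeta+1)$ and $\eta=0$ one gets $2\lambda+\eta=-2(\zeta+1)$, which is strictly negative precisely because $\zeta>-1$; this supplies the hypothesis $2\lambda+\eta<0$ required by Corollary~\ref{CR3.12}. It then remains to substitute into the critical exponent of that corollary: the threshold $N\leq-(2\lambda+\eta)$ becomes $N\leq 2(\zeta+1)$, while in the complementary range $1+\frac{\kappa+2}{2\lambda+\eta+N}=1+\frac{\kappa+2}{N-2(\zeta+1)}$. These are exactly the two branches of $p^*(\kappa,N,\zeta)$, so Corollary~\ref{CR3.12} gives nonexistence of weak solutions whenever $p<p^*(\kappa,N,\zeta)$.

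There is essentially no analytic obstacle here, since everything is inherited from Corollary~\ref{CR3.12}; the only points deserving a moment's care are that the $L^1$ hypothesis on $g$ produces the borderline scaling $\eta=0$ (rather than a nonzero power of $R$) and that the reference point may be taken to be the origin, as the limit of $\int_{B(0,R)}g$ does not depend on the center. The remaining work is the purely algebraic matching of the two critical-exponent formulas.
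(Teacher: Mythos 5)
Your proposal is correct and follows exactly the paper's own derivation: Corollary \ref{CR3.13} is obtained from Corollary \ref{CR3.12} by computing $\int_0^R t^\zeta\,dt\sim R^{\zeta+1}$ to get $\lambda=-(\zeta+1)$, using $g\in L^1(\RR^N)$ with $g\not\equiv 0$ to get $\eta=0$, and substituting into $p^*(\kappa,N,\lambda,\eta)$. No gaps.
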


Taking $V\equiv 1$ and $\zeta=0$ in Corollary \ref{CR3.13} (so $\kappa=0$), we obtain the following result. 

\begin{corollary}\label{CR3.14}{\rm{
Let $V\equiv 1$ and $W=g\in L^1(\RR^N)$, $g\geq 0$ a.e. in $\mathbb{R}^N$ and $g\not\equiv 0$.
 Let $u_0\in L^1(\mathbb{R}^N)$ and $u_1\in L^1_{\loc}(\RR^N)$ satisfy \eqref{uu0uu1}.  
If
\begin{equation}\label{cd-S}
1<p<p^*(N):=\left\{\begin{array}{llll}
\infty &\mbox{if}& 	N\in \{1,2\},\\
1+\displaystyle\frac{2}{N-2} &\mbox{if}&  N\geq 3,
\end{array}
\right.
\end{equation}
then \eqref{P1} possesses no weak solution.}}	
\end{corollary}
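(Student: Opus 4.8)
The plan is to obtain this statement as the direct specialization of Corollary \ref{CR3.13} to the parameter values $\kappa=0$ (corresponding to $V\equiv 1$) and $\zeta=0$ (corresponding to the time-independent inhomogeneity $W=g$). Accordingly, the work reduces to checking that the hypotheses of Corollary \ref{CR3.13} are met under these choices and that the critical exponent formula collapses to the stated $p^*(N)$.

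First I would verify the admissibility of the potential. Taking $C_V=1$ and $\kappa=0$, the constant function $V\equiv 1$ satisfies $V(x)=1=|x|^0$ with $\kappa=0>-2$, and $V^{-1/(p-1)}\equiv 1\in L^1_{\loc}(\RR^N)$; thus both hypotheses on $V$ in Corollary \ref{CR3.13} hold. Next I would verify the admissibility of the inhomogeneous term: with $\zeta=0>-1$ one has $W(t,x)=t^0 g(x)=g(x)$, so the given $W=g$ is exactly of the required product form, and the conditions $g\in L^1(\RR^N)$, $g\geq 0$ a.e. and $g\not\equiv 0$ coincide with those of Corollary \ref{CR3.13}. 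The initial-data hypotheses $u_0\in L^1(\RR^N)$, $u_1\in L^1_{\loc}(\RR^N)$ and $(u_0+u_1)(x)\geq 0$ a.e. are identical in both statements.

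It then remains to reduce the critical exponent. Substituting $\kappa=0$ and $\zeta=0$ into the threshold $2(\zeta+1)$ of Corollary \ref{CR3.13} gives $2(\zeta+1)=2$, so the case $N\leq 2(\zeta+1)$ becomes $N\in\{1,2\}$ with $p^*=\infty$, while the case $N>2(\zeta+1)$ becomes $N\geq 3$ with
$$
1+\frac{\kappa+2}{N-2(\zeta+1)}=1+\frac{2}{N-2}.
$$
This is precisely $p^*(N)$ as defined in \eqref{cd-S}, so the range $1<p<p^*(N)$ matches $p<p^*(\kappa,N,\zeta)$ of Corollary \ref{CR3.13}. Invoking that corollary then yields the nonexistence of a weak solution.

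Since every analytic ingredient is already packaged in Corollary \ref{CR3.13} (which itself rests on Theorem \ref{T3.9} and the Bianchi--Setti cut-off functions of Lemma \ref{L2.1}), there is no substantive obstacle here; the only point requiring care is confirming that the chosen parameters lie in the admissible ranges, namely that the strict inequalities $\kappa>-2$ and $\zeta>-1$ are both satisfied by $\kappa=\zeta=0$, and that the piecewise exponent formula simplifies exactly as claimed.
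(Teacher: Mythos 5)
Your proposal is correct and follows exactly the route the paper takes: the corollary is obtained by specializing Corollary \ref{CR3.13} to $V\equiv 1$ (so $\kappa=0$) and $\zeta=0$, after which the threshold $2(\zeta+1)=2$ and the exponent $1+\frac{\kappa+2}{N-2(\zeta+1)}=1+\frac{2}{N-2}$ reproduce \eqref{cd-S}. Your additional checks that $\kappa=0>-2$ and $\zeta=0>-1$ lie in the admissible ranges are exactly the (implicit) verifications the paper relies on.
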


\begin{remark}{\rm{Under the assumptions of Corollary \ref{CR3.14}, if $N\geq 3$ and $$p=p^*(N),$$ following the same argument used in \cite{JSV} (see also \cite{BO}), one can show that  \eqref{P1} possesses no weak solution. Furthermore, it is well known (see e.g. \cite{Zhang1}) that, if $N\geq 3$ and $$p>p^*(N),$$ then   \eqref{P1} possesses  stationary solutions for some $W=W(x)>0$. This shows the sharpness of the obtained condition \eqref{cd-S}.  
}}
\end{remark}

\section{Proofs of the main results}\label{sec4}

This section is devoted to the proofs of Theorems \ref{T3.2} and \ref{T3.9}. 

Assume that there exists $x_0\in \mathbb{M}$ such that \eqref{Ricc1} holds for some $C_0\geq 0$ and $\sigma>-2$.  Let $p>1$, $V=V(x)>0$ a.e. in $\mathbb{M}$, $W\in L^1_{\loc}([0,\infty)\times \mathbb{M})$ and $u_i\in L^1_{\loc}(\mathbb{M})$, $i=0,1$. 

We first need to establish some

\subsection{Preliminary estimates}

\subsubsection{A priori estimate} 

For $\psi\in \Psi$, let us introduce the terms
\begin{eqnarray}
\label{A1psi}A_1(\psi)&=& \int_{\overline{Q}}V^{\frac{-1}{p-1}}\psi^{\frac{-1}{p-1}}\left|\psi_{tt}\right|^{\frac{p}{p-1}}\,dx\,dt,\\
\label{A2psi}A_2(\psi)&=& \int_{\overline{Q}}V^{\frac{-1}{p-1}}\psi^{\frac{-1}{p-1}}\left|\psi_t\right|^{\frac{p}{p-1}}\,dx\,dt,\\
\label{A3psi}A_3(\psi)&=& \int_{\overline{Q}}V^{\frac{-1}{p-1}}\psi^{\frac{-1}{p-1}}\left|\Delta \psi\right|^{\frac{p}{p-1}}\,dx\,dt,\\
\label{A4psi}A_4(\psi)&=& \int_{\overline{Q}}V^{\frac{-1}{p-1}}\psi^{\frac{-1}{p-1}}\left|\Delta \psi_t\right|^{\frac{p}{p-1}}\,dx\,dt.
\end{eqnarray}
We have the following a priori estimate.

\begin{lemma}\label{PR4.1}{\rm{
If there exists	$u\in L^p_{\loc}\left(Q,V\,dx\,dt\right)\cap L^1_{\loc}(Q)$ which is a weak solution to \eqref{P1}, then 
\begin{eqnarray}\label{apest-P1}
\frac{1}{2}\int_{Q}|u|^p \psi V\,dx\,dt\nonumber &&+\int_{\mathbb{M}}u_0(x)\left(\psi(0,x)-\psi_t(0,x)-\Delta \psi(0,x)\right)\,dx\\ \nonumber &&+\int_{\mathbb{M}}u_1(x)\psi(0,x)\,dx\\
\nonumber &&+\int_Q W(t,x)\psi(t,x)\,dx\,dt\\
&&\leq C\sum_{j=1}^4A_j(\psi),	
\end{eqnarray}
for every $\psi\in \Psi$, provided that $A_j(\psi)<\infty$ for all $j=1,2,3,4$. }}
\end{lemma}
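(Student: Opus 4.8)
The plan is to derive \eqref{apest-P1} directly from the weak formulation \eqref{ws1} by estimating its single right-hand side integral
\[
I(\psi):=\int_{Q}u\left(\psi_{tt}-\Delta \psi-\psi_t+\Delta \psi_t\right)\,dx\,dt
\]
via the nonlinear capacity method, that is, by Young's inequality, so as to absorb a fraction of the nonlinear term $\int_Q |u|^p\psi V\,dx\,dt$ into the left-hand side. Note that no further integration by parts is needed, since in \eqref{ws1} the derivatives have already been transferred onto the test function, and $I(\psi)$ is simply the integral of $u$ against a bounded, compactly supported continuous function.

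First I would bound $I(\psi)$ crudely by the triangle inequality,
\[
I(\psi)\leq \int_Q |u|\left(|\psi_{tt}|+|\Delta \psi|+|\psi_t|+|\Delta \psi_t|\right)\,dx\,dt,
\]
so that it suffices to estimate each of the four terms $\int_Q |u|\,|D\psi|\,dx\,dt$, where $D\psi$ denotes one of $\psi_{tt}$, $\Delta\psi$, $\psi_t$, $\Delta\psi_t$. On the set $\{\psi>0\}$ I factor the integrand as
\[
|u|\,|D\psi|=\Big(|u|\,(V\psi)^{\frac1p}\Big)\Big((V\psi)^{-\frac1p}|D\psi|\Big)
\]
and apply $ab\leq \frac{\varepsilon}{p}a^p+\frac{p-1}{p}\varepsilon^{-\frac{1}{p-1}}b^{\frac{p}{p-1}}$ with a parameter $\varepsilon>0$ to be chosen, obtaining
\[
\int_Q |u|\,|D\psi|\,dx\,dt\leq \frac{\varepsilon}{p}\int_Q |u|^p V\psi\,dx\,dt+C_\varepsilon\int_{\overline{Q}}V^{\frac{-1}{p-1}}\psi^{\frac{-1}{p-1}}|D\psi|^{\frac{p}{p-1}}\,dx\,dt,
\]
where the last integral is exactly the corresponding $A_j(\psi)$. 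A small point worth recording here is that the hypothesis $A_j(\psi)<\infty$ together with $V>0$ a.e.\ forces $D\psi=0$ a.e.\ on $\{\psi=0\}$; hence that set contributes nothing to $\int_Q|u|\,|D\psi|\,dx\,dt$, and the factorization above is legitimate.

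Summing the four resulting estimates and choosing $\varepsilon=\tfrac{p}{8}$, so that the total coefficient $\tfrac{4\varepsilon}{p}$ in front of $\int_Q|u|^pV\psi\,dx\,dt$ equals $\tfrac12$, yields
\[
I(\psi)\leq \frac12\int_Q |u|^p V\psi\,dx\,dt+C\sum_{j=1}^4 A_j(\psi),
\]
with $C=\tfrac{p-1}{p}\,(p/8)^{-\frac{1}{p-1}}$ depending only on $p$. Inserting this bound into \eqref{ws1} and transferring the term $\tfrac12\int_Q|u|^pV\psi\,dx\,dt$ to the left-hand side produces precisely \eqref{apest-P1}.

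I expect the computation to be routine; the only genuine issues are the degeneration of the weights $(V\psi)^{\pm 1/p}$ on $\{\psi=0\}$, handled by the remark above, and the admissibility of moving the absorbed term across the inequality. The latter is legitimate because $\int_Q |u|^pV\psi\,dx\,dt<\infty$, which follows from $u\in L^p_{\loc}(Q,V\,dx\,dt)$ together with the compactness of $\supp(\psi)$; without this finiteness the rearrangement would be meaningless, so it is the one hypothesis that must be invoked explicitly.
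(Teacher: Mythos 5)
Your proposal is correct and follows essentially the same route as the paper: both start from the weak formulation \eqref{ws1}, bound its right-hand side by the four terms $\int_Q|u|\,|D\psi|\,dx\,dt$, apply Young's inequality with the weight $(V\psi)^{1/p}$ so that each term contributes $\tfrac18\int_Q|u|^pV\psi\,dx\,dt+C\,A_j(\psi)$, and absorb the resulting $\tfrac12\int_Q|u|^pV\psi\,dx\,dt$ into the left-hand side. Your extra remarks on the degeneration on the set $\{\psi=0\}$ and on the finiteness of $\int_Q|u|^pV\psi\,dx\,dt$ needed to justify the absorption address points the paper leaves implicit.
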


\begin{proof}
Let 	$u\in L^p_{\loc}\left(Q,V\,dx\,dt\right)\cap L^1_{\loc}(Q)$ be a weak solution to \eqref{P1} and $\psi\in \Psi$ with $A_j(\psi)<\infty$ for all $j=1,2,3,4$.  By \eqref{ws1}, we have
\begin{equation}\label{S1-PR4.1}
\begin{aligned}
\int_{Q}|u|^p \psi V\,dx\,dt &+\int_{\mathbb{M}}u_0(x)\left(\psi(0,x)-\psi_t(0,x)-\Delta \psi(0,x)\right)\,dx\\
&+\int_{\mathbb{M}}u_1(x)\psi(0,x)\,dx+\int_Q W\psi\,dx\,dt\\
&	\leq \int_{Q}|u|\left|\psi_{tt}\right|\,dx\,dt+ \int_{Q}|u|\left|\psi_{t}\right|\,dx\,dt\\&+\int_{Q}|u|\left|\Delta \psi\right|\,dx\,dt+\int_{Q}|u|\left|\Delta \psi_{t}\right|\,dx\,dt.
\end{aligned}
\end{equation}	
Making use of Young's inequality,  we obtain
\begin{equation}\label{S2-PR4.1}
\begin{aligned}
\int_{Q}|u|\left|\psi_{tt}\right|\,dx\,dt&= \int_{Q}|u|(\psi V)^{\frac{1}{p}}\left|\psi_{tt}\right|(\psi V)^{\frac{-1}{p}}\,dx\,dt\\
&\leq \frac{1}{8}\int_{Q}|u|^p \psi V\,dx\,dt+C A_1(\psi).
\end{aligned}
\end{equation}
Similarly, we have
\begin{eqnarray}
\label{S3-PR4.1} \int_{Q}|u|\left|\psi_{t}\right|\,dx\,dt&\leq & \frac{1}{8}\int_{Q}|u|^p \psi V\,dx\,dt+C A_2(\psi),\\
\label{S4-PR4.1}\int_{Q}|u|\left|\Delta \psi\right|\,dx\,dt &\leq & \frac{1}{8}\int_{Q}|u|^p \psi V\,dx\,dt+C A_3(\psi),\\	
\label{S5-PR4.1}\int_{Q}|u|\left|\Delta \psi_t\right|\,dx\,dt &\leq & \frac{1}{8}\int_{Q}|u|^p \psi V\,dx\,dt+C A_4(\psi).
\end{eqnarray}
Finally, \eqref{apest-P1} follows from \eqref{S1-PR4.1}, \eqref{S2-PR4.1}, \eqref{S3-PR4.1}, \eqref{S4-PR4.1} and \eqref{S5-PR4.1}.
\end{proof}

\subsubsection{Test function}
Let us introduce a cut-off function $\xi\in C^\infty([0,\infty))$ satisfying the following properties:
$$
0\leq \xi\leq 1,\,\, \xi\equiv 1 \mbox{ in } \left[0,\frac{1}{2}\right],\,\, \xi\equiv 0 \mbox{ in } [1,\infty).	
$$
For $T>0$ and  $\iota\gg 1$,   let 
$$
\xi_T(t)=\xi^\iota\left(T^{-1}t\right),\quad t\geq 0.
$$
For $R> 1$, let 
\begin{equation}\label{phiR}
\varphi_R(x)=\psi_R^\iota(x),\quad x\in \mathbb{M},
\end{equation}
where $\{\psi_R\}_{R>1}$ is the family of cut-off functions provided by Lemma \ref{L2.1}.  We consider test functions of the form
\begin{equation}\label{testf-P1}
\psi(t,x)=\xi_T(t)\varphi_R(x),\quad (t,x)\in Q.	
\end{equation}
By the properties of the function $\xi$ and Lemma \ref{L2.1}, we obtain the following result.

\begin{lemma}\label{LL4.2}{\rm{
For all $T>0$, $R> 1$ and $\iota\gg1$, the function $\psi$ defined by \eqref{testf-P1} belongs to $\Psi$. 
}}
\end{lemma}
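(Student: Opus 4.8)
The plan is to verify directly the three defining properties of the class $\Psi$ for the function $\psi(t,x)=\xi_T(t)\varphi_R(x)$: namely that $\psi\in C^2(Q)$, that $\psi\geq 0$, and that $\supp(\psi)$ is a compact subset of $Q$. Since $\psi$ factors as a product of a function of $t$ alone and a function of $x$ alone, each property decouples into a statement about the factor $\xi_T$ on $[0,\infty)$ and the factor $\varphi_R$ on $\mathbb{M}$, which I would check separately.

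For regularity and sign, I would first observe that $\xi\in C^\infty([0,\infty))$ forces $\xi^\iota\in C^\infty([0,\infty))$ for the (large integer) exponent $\iota$, so that $\xi_T(t)=\xi^\iota(T^{-1}t)$ is smooth in $t$; moreover $\xi$ vanishes identically on $[1,\infty)$, hence to infinite order at the edge of its active region, so no regularity is lost in taking the power. By Lemma \ref{L2.1}, $\psi_R\in C_c^\infty(\mathbb{M})$, and for the same reason its power $\varphi_R=\psi_R^\iota$ is smooth on $\mathbb{M}$. A function on $Q$ that is the product of a smooth function of $t$ and a smooth function of $x$ is itself smooth, so in particular $\psi\in C^2(Q)$. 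Nonnegativity is then immediate: the bounds $0\leq \xi\leq 1$ and $0\leq \psi_R\leq 1$ give $\xi_T\geq 0$ and $\varphi_R\geq 0$, whence $\psi=\xi_T\varphi_R\geq 0$.

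It remains to control the support. Since $\xi\equiv 0$ on $[1,\infty)$, the rescaling yields $\xi_T(t)=0$ for $t\geq T$, so $\supp(\xi_T)\subset[0,T]$; and by property (ii) of Lemma \ref{L2.1}, $\supp(\varphi_R)=\supp(\psi_R)\subset B(x_0,\gamma R)$. Therefore $\supp(\psi)\subset [0,T]\times\overline{B(x_0,\gamma R)}$. The one point deserving care---and the only place where the hypotheses on $\mathbb{M}$ enter---is that $\overline{B(x_0,\gamma R)}$ is compact; this is exactly where the completeness of $\mathbb{M}$ is used, via the Hopf--Rinow theorem, which guarantees that closed and bounded sets, in particular closed geodesic balls, are compact. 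Consequently $\supp(\psi)$ sits inside the compact set $[0,T]\times\overline{B(x_0,\gamma R)}\subset Q$, giving $\supp(\psi)\subset\subset Q$, and all three conditions hold, so $\psi\in\Psi$. The verification is uniform in $T>0$, $R>1$ and $\iota\gg1$, so I do not expect any obstacle of substance: the smoothness is formal and the only genuine input is the compactness of closed balls. The largeness of $\iota$ plays no role at this stage and is recorded only because the same test functions must later render the quantities $A_j(\psi)$ finite.
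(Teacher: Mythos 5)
Your verification is correct and matches what the paper intends: the paper states this lemma without proof, noting only that it follows from the properties of $\xi$ and Lemma \ref{L2.1}, and your argument fills in exactly those routine checks (smoothness and nonnegativity of the product, support contained in $[0,T]\times\overline{B(x_0,\gamma R)}$). The only superfluous step is the appeal to Hopf--Rinow: compactness of $\supp(\psi_R)$ is already part of the conclusion of Lemma \ref{L2.1} (that is what $\psi_R\in C_c^\infty(\mathbb{M})$ means there), so you need not re-derive it from completeness, though doing so is harmless.
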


\subsubsection{Estimates of $A_j(\psi)$} 

For $T,R,\iota\gg 1$, let $\psi$ be the test function defined by \eqref{testf-P1}. In this part we shall estimate the terms $A_j(\psi)$, $j=1,2,3,4$.  Let $\gamma_0>1$ be the parameter provided by Lemma \ref{L2.1}.

\begin{lemma}\label{LL4.3}{\rm{
Let $V^{\frac{-1}{p-1}}\in L^1_{\loc}(\mathbb{M})$. For  $\gamma>\gamma_0$, we have 
\begin{equation}\label{est-LL4.3}
A_1(\psi)\leq C T^{1-\frac{2p}{p-1}}	\int_{B(x_0,\gamma R)} V^{\frac{-1}{p-1}}(x)\,dx.
\end{equation}
}}
\end{lemma}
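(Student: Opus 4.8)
The plan is to exploit the product structure $\psi(t,x)=\xi_T(t)\varphi_R(x)$ so that $A_1(\psi)$ factors completely into a one-dimensional time integral and a space integral over $\mathbb{M}$. Since $\varphi_R$ is independent of $t$, we have $\psi_{tt}=\xi_T''(t)\varphi_R(x)$, and hence
$$
\psi^{\frac{-1}{p-1}}\left|\psi_{tt}\right|^{\frac{p}{p-1}}
=\xi_T^{\frac{-1}{p-1}}\left|\xi_T''\right|^{\frac{p}{p-1}}\,\varphi_R^{\frac{p}{p-1}-\frac{1}{p-1}}
=\xi_T^{\frac{-1}{p-1}}\left|\xi_T''\right|^{\frac{p}{p-1}}\,\varphi_R,
$$
where the crucial simplification is the identity $\frac{p}{p-1}-\frac{1}{p-1}=1$, which reduces the power of $\varphi_R$ to exactly one. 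By Fubini's theorem this yields
$$
A_1(\psi)=\left(\int_0^\infty \xi_T^{\frac{-1}{p-1}}\left|\xi_T''\right|^{\frac{p}{p-1}}\,dt\right)\left(\int_{\mathbb{M}}V^{\frac{-1}{p-1}}(x)\,\varphi_R(x)\,dx\right).
$$

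For the time factor I would use the scaling substitution $s=T^{-1}t$. Since $\xi_T(t)=\xi^\iota(T^{-1}t)$, the chain rule gives $\xi_T''(t)=T^{-2}(\xi^\iota)''(T^{-1}t)$, so that $\left|\xi_T''\right|^{\frac{p}{p-1}}$ carries the factor $T^{-\frac{2p}{p-1}}$; changing variables $dt=T\,ds$ then produces
$$
\int_0^\infty \xi_T^{\frac{-1}{p-1}}\left|\xi_T''\right|^{\frac{p}{p-1}}\,dt
=T^{1-\frac{2p}{p-1}}\int_0^1 \xi^{\frac{-\iota}{p-1}}(s)\left|(\xi^\iota)''(s)\right|^{\frac{p}{p-1}}\,ds,
$$
the integration range collapsing to $[0,1]$ because $\xi\equiv 1$ on $[0,\tfrac12]$ (so $(\xi^\iota)''=0$ there) and $\xi\equiv 0$ on $[1,\infty)$. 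The space factor is controlled trivially: since $0\leq\varphi_R=\psi_R^\iota\leq 1$, since $\supp(\varphi_R)=\supp(\psi_R)\subset B(x_0,\gamma R)$ for every $\gamma>\gamma_0$ by Lemma \ref{L2.1}(ii), and since $V^{\frac{-1}{p-1}}\geq 0$, we obtain
$$
\int_{\mathbb{M}}V^{\frac{-1}{p-1}}(x)\,\varphi_R(x)\,dx\leq \int_{B(x_0,\gamma R)}V^{\frac{-1}{p-1}}(x)\,dx.
$$
Multiplying the two factors gives precisely \eqref{est-LL4.3}.

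The one point demanding care — and the sole place where the hypothesis $\iota\gg 1$ is used — is the finiteness of the dimensionless constant $\int_0^1 \xi^{\frac{-\iota}{p-1}}\left|(\xi^\iota)''\right|^{\frac{p}{p-1}}\,ds$. Because $\xi$ may vanish on $[\tfrac12,1]$, the negative power $\xi^{\frac{-\iota}{p-1}}$ threatens to blow up near such zeros. However, differentiating $\xi^\iota$ twice gives $(\xi^\iota)''=\iota(\iota-1)\xi^{\iota-2}(\xi')^2+\iota\xi^{\iota-1}\xi''$, whose lowest surviving power of $\xi$ is $\xi^{\iota-2}$; since $\xi,\xi',\xi''$ are bounded, $\left|(\xi^\iota)''\right|\leq C\,\xi^{\iota-2}$. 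After raising to the power $\frac{p}{p-1}$ and multiplying by $\xi^{\frac{-\iota}{p-1}}$, the net power of $\xi$ in the integrand is $\frac{\iota(p-1)-2p}{p-1}$, which is nonnegative as soon as $\iota\geq \frac{2p}{p-1}$. Thus for $\iota$ large the integrand stays bounded, the constant is finite and independent of $T$ and $R$, and the estimate follows. Everything beyond this integrability check is routine bookkeeping of the scaling.
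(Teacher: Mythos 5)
Your proof is correct and follows essentially the same route as the paper: factor $A_1(\psi)$ into a time integral times $\int_{\mathbb M}V^{\frac{-1}{p-1}}\varphi_R\,dx$, extract the scaling factor $T^{1-\frac{2p}{p-1}}$ from the time integral using $|(\xi^\iota)''|\leq C\xi^{\iota-2}$ with $\iota\gg1$ ensuring the net power of $\xi$ is nonnegative, and bound the space factor by $0\leq\varphi_R\leq1$ together with $\supp(\psi_R)\subset B(x_0,\gamma R)$. Your explicit integrability check for the constant is slightly more detailed than the paper's, but the argument is the same.
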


\begin{proof}
By \eqref{A1psi} and \eqref{testf-P1}, we have
\begin{equation}\label{S1-L44.3}
A_1(\psi)=\left(\int_0^\infty 	\xi_T^{\frac{-1}{p-1}}(t) \left|\xi_T''(t)\right|^{\frac{p}{p-1}}\,dt\right)\left(\int_{\mathbb{M}} V^{\frac{-1}{p-1}}(x) \varphi_R(x)\,dx\right). 
\end{equation}
On the other hand, by the definition of the function $\xi_T$ and the properties of the cut-off function $\xi$, we have
\begin{equation}\label{S2-L44.3}
\begin{aligned}
\int_0^\infty 	\xi_T^{\frac{-1}{p-1}}(t) \left|\xi_T''(t)\right|^{\frac{p}{p-1}}\,dt &\leq C 
\int_{\frac{T}{2}}^T  \xi^{\frac{-\iota}{p-1}}(T^{-1}t)\left|T^{-2}\xi^{\iota-2}(T^{-1}t)\right|^{\frac{p}{p-1}}\,dt\\
&=C T^{\frac{-2p}{p-1}}\int_{\frac{T}{2}}^T\xi^{\iota-\frac{2p}{p-1}}(T^{-1}t)\,dt\\
&\leq C T^{1-\frac{2p}{p-1}}.
\end{aligned}
\end{equation}
Furthermore, by the definition of the function $\varphi_R$ and Lemma \ref{L2.1}, for $\gamma>\gamma_0$, we have
\begin{equation}\label{S3-L44.3}
\begin{aligned}
\int_{\mathbb{M}} V^{\frac{-1}{p-1}}(x) \varphi_R(x)\,dx&=\int_{B(x_0,\gamma R)} V^{\frac{-1}{p-1}}(x) \psi_R^\iota(x)\,dx	\\
&\leq \int_{B(x_0,\gamma R)} V^{\frac{-1}{p-1}}(x)\,dx.
\end{aligned}
\end{equation}
Therefore, \eqref{est-LL4.3} follows from \eqref{S1-L44.3}, \eqref{S2-L44.3} and \eqref{S3-L44.3}. 
\end{proof}

Similarly, by \eqref{A2psi} and \eqref{testf-P1}, we obtain the following estimates.

\begin{lemma}\label{LL4.4}{\rm{
Let $V^{\frac{-1}{p-1}}\in L^1_{\loc}(\mathbb{M})$. For  $\gamma>\gamma_0$, we have 
$$
A_2(\psi)\leq C T^{1-\frac{p}{p-1}}	\int_{B(x_0,\gamma R)} V^{\frac{-1}{p-1}}(x)\,dx.
$$
}}
\end{lemma}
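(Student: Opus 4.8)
The plan is to repeat verbatim the scheme used for $A_1(\psi)$ in Lemma \ref{LL4.3}, the only change being that the second time derivative $\psi_{tt}$ is replaced by the first derivative $\psi_t$. First I would insert the product form $\psi(t,x)=\xi_T(t)\varphi_R(x)$ from \eqref{testf-P1} into \eqref{A2psi} and separate variables: since $\psi_t=\xi_T'(t)\varphi_R(x)$ and $\varphi_R\geq 0$, the factor $\psi^{-1/(p-1)}\left|\psi_t\right|^{p/(p-1)}$ splits as $\xi_T^{-1/(p-1)}\left|\xi_T'\right|^{p/(p-1)}$ times $\varphi_R^{-1/(p-1)}\varphi_R^{p/(p-1)}=\varphi_R$, exactly as in the $A_1$ computation. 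This gives the factorization
\[
A_2(\psi)=\left(\int_0^\infty \xi_T^{\frac{-1}{p-1}}(t)\left|\xi_T'(t)\right|^{\frac{p}{p-1}}\,dt\right)\left(\int_{\mathbb{M}} V^{\frac{-1}{p-1}}(x)\varphi_R(x)\,dx\right),
\]
which is the analogue of \eqref{S1-L44.3}.

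Next I would bound the two factors separately. For the temporal factor, differentiating $\xi_T(t)=\xi^\iota(T^{-1}t)$ once produces a single power of $T^{-1}$ (rather than the $T^{-2}$ arising from $\xi_T''$ in Lemma \ref{LL4.3}), so after restricting to the support $\left[\frac{T}{2},T\right]$ of $\xi_T'$ and rescaling one obtains
\[
\int_0^\infty \xi_T^{\frac{-1}{p-1}}(t)\left|\xi_T'(t)\right|^{\frac{p}{p-1}}\,dt\leq C\,T^{\frac{-p}{p-1}}\int_{\frac{T}{2}}^T \xi^{\iota-\frac{p}{p-1}}(T^{-1}t)\,dt\leq C\,T^{1-\frac{p}{p-1}}.
\]
For the spatial factor I would invoke \eqref{phiR} and Lemma \ref{L2.1}: for $\gamma>\gamma_0$ one has $\supp(\psi_R)\subset B(x_0,\gamma R)$ and $0\leq \psi_R\leq 1$, whence $\int_{\mathbb{M}}V^{\frac{-1}{p-1}}\varphi_R\,dx\leq \int_{B(x_0,\gamma R)}V^{\frac{-1}{p-1}}\,dx$, exactly as in \eqref{S3-L44.3}. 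Multiplying the two bounds yields the claimed estimate.

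The computation presents no genuine obstacle, being structurally identical to Lemma \ref{LL4.3}; the only point that requires care is the same one as there, namely the integrability of the temporal integrand in spite of the negative power $\xi_T^{-1/(p-1)}$. Collecting exponents shows that $\xi_T^{-1/(p-1)}\left|\xi_T'\right|^{p/(p-1)}$ is a bounded multiple of $\xi^{\iota-\frac{p}{p-1}}(T^{-1}t)$, so one must keep $\iota\gg 1$ large enough that $\iota-\frac{p}{p-1}\geq 0$; then the singularity of $\xi^{-1/(p-1)}$ at the zeros of $\xi$ is absorbed by the high power $\xi^{\iota-1}$ coming from $\xi_T'$, and the time integral remains finite. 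This is precisely the role played by the exponent $\iota$ in the definition of $\xi_T$.
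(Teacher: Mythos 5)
Your proof is correct and follows exactly the route the paper intends: the paper proves Lemma \ref{LL4.4} by simply remarking that it is obtained ``similarly'' to Lemma \ref{LL4.3}, and your argument is precisely that analogue, with the single $T^{-1}$ from $\xi_T'$ replacing the $T^{-2}$ from $\xi_T''$ and the exponent bookkeeping $\iota-\frac{p}{p-1}\geq 0$ handled as in \eqref{S2-L44.3}.
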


\begin{lemma}\label{LL4.5}{\rm{
Let $V^{\frac{-1}{p-1}}\in L^1_{\loc}(\mathbb{M})$. For   $\gamma>\gamma_0$,  we have 
\begin{equation}\label{est-LL4.5}
A_3(\psi)\leq C TR^{-\frac{\left(1+\frac{\min\{\sigma,2\}}{2}\right)p}{p-1}}	\int_{B(x_0,\gamma R)} V^{\frac{-1}{p-1}}(x)\,dx.
\end{equation}
}}
\end{lemma}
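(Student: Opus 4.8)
The plan is to follow the same separation-of-variables scheme used in the proof of Lemma~\ref{LL4.3}. Since the Laplace--Beltrami operator acts only on the spatial variable, the product structure $\psi(t,x)=\xi_T(t)\varphi_R(x)$ gives $\Delta\psi=\xi_T\,\Delta\varphi_R$, and hence, collecting the powers of $\xi_T$,
$$
A_3(\psi)=\left(\int_0^\infty \xi_T(t)\,dt\right)\left(\int_{\mathbb M} V^{\frac{-1}{p-1}}\varphi_R^{\frac{-1}{p-1}}\left|\Delta\varphi_R\right|^{\frac{p}{p-1}}\,dx\right),
$$
where I used that $\frac{-1}{p-1}+\frac{p}{p-1}=1$. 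The time factor is immediate: by the definition of $\xi_T$ one has $\int_0^\infty\xi_T(t)\,dt=T\int_0^1\xi^\iota(s)\,ds\le CT$, which supplies the factor $T$ in the claimed bound.

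First I would analyze the spatial integrand. Writing $\varphi_R=\psi_R^\iota$ and applying the chain rule for the Laplacian gives
$$
\Delta\varphi_R=\iota\,\psi_R^{\iota-1}\Delta\psi_R+\iota(\iota-1)\,\psi_R^{\iota-2}\left|\nabla\psi_R\right|^2 .
$$
Using the gradient and Laplacian bounds of Lemma~\ref{L2.1}(iii) together with Remark~\ref{RK2.2}, namely $|\nabla\psi_R|\le CR^{-1}$ and $|\Delta\psi_R|\le CR^{-\left(1+\frac{\min\{\sigma,2\}}{2}\right)}$, and observing that $1+\frac{\min\{\sigma,2\}}{2}\le 2$ forces $R^{-2}\le R^{-\left(1+\frac{\min\{\sigma,2\}}{2}\right)}$ for $R>1$, while $0\le\psi_R\le1$ gives $\psi_R^{\iota-1}\le\psi_R^{\iota-2}$, both contributions are absorbed into
$$
\left|\Delta\varphi_R\right|\le C\,\psi_R^{\iota-2}R^{-\left(1+\frac{\min\{\sigma,2\}}{2}\right)}.
$$

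The crucial point, which I expect to be the main obstacle, is to control the singular weight $\varphi_R^{\frac{-1}{p-1}}=\psi_R^{\frac{-\iota}{p-1}}$, which blows up as $\psi_R\to0$. The remedy is the choice $\iota\gg1$: raising the previous bound to the power $\frac{p}{p-1}$ and multiplying by $\varphi_R^{\frac{-1}{p-1}}$, the net power of $\psi_R$ is
$$
-\frac{\iota}{p-1}+(\iota-2)\frac{p}{p-1}=\iota-\frac{2p}{p-1},
$$
which is nonnegative once $\iota$ is large enough, so that $\psi_R^{\iota-\frac{2p}{p-1}}\le1$ and the singularity disappears. Finally, since $\supp(\varphi_R)\subset B(x_0,\gamma R)$ for $\gamma>\gamma_0$ by Lemma~\ref{L2.1}(ii), the spatial integral is confined to $B(x_0,\gamma R)$, yielding
$$
\int_{\mathbb M} V^{\frac{-1}{p-1}}\varphi_R^{\frac{-1}{p-1}}\left|\Delta\varphi_R\right|^{\frac{p}{p-1}}\,dx\le C\,R^{-\frac{\left(1+\frac{\min\{\sigma,2\}}{2}\right)p}{p-1}}\int_{B(x_0,\gamma R)}V^{\frac{-1}{p-1}}(x)\,dx .
$$
Combining this with the time factor $\int_0^\infty\xi_T(t)\,dt\le CT$ gives \eqref{est-LL4.5}.
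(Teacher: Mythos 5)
Your proposal is correct and follows essentially the same route as the paper: the same factorization of $A_3(\psi)$, the same chain-rule expansion of $\Delta(\psi_R^\iota)$, the same absorption of $|\nabla\psi_R|^2$ into the $|\Delta\psi_R|$ bound, and the same cancellation of the singular weight via $\iota-\frac{2p}{p-1}\ge 0$. The only cosmetic difference is that the paper further localizes the spatial integral to the annulus $B(x_0,\gamma R)\setminus B(x_0,R)$ (where $\Delta\varphi_R$ is supported), which is not needed for the stated bound.
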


\begin{proof}
By \eqref{A3psi} and \eqref{testf-P1}, we have
\begin{equation}\label{S1-LL45}
A_3(\psi)=\left(\int_0^\infty \xi_T(t)\,dt\right)\left(\int_{\mathbb{M}} V^{\frac{-1}{p-1}}(x)\varphi_R^{\frac{-1}{p-1}}(x)|\Delta \varphi_R(x)|^{\frac{p}{p-1}}\,dx\right).	
\end{equation}
On the other hand, by the definition of the function $\xi_T$ and the properties of the cut-off function $\xi$, we have
\begin{equation}\label{S11-LL45}
\begin{aligned}
\int_0^\infty \xi_T(t)\,dt&= \int_0^T \xi^\iota(T^{-1}t)\,dt\\
&\leq CT.	
\end{aligned}
\end{equation}
Furthermore, by the definition of $\varphi_R$, for all $x\in \mathbb{M}$, we have
$$
\begin{aligned}
\Delta \varphi_R(x)&=\Delta\left(\psi_R^\iota(x)\right)\\
&=\iota \psi_R^{\iota-1}(x)\Delta \psi_R(x)+\iota(\iota-1)\psi_R^{\iota-2}(x)|\nabla\psi_R(x)|^2,
\end{aligned}	
$$
which implies by Lemma \ref{L2.1} and Remark \ref{RK2.2} that for $\gamma>\gamma_0$, 
\begin{equation}\label{S2-LL45}
\int_{\mathbb{M}} V^{\frac{-1}{p-1}}(x)\varphi_R^{\frac{-1}{p-1}}(x)|\Delta \varphi_R(x)|^{\frac{p}{p-1}}\,dx=\int_{B(x_0,\gamma R)\backslash B(x_0,R)} V^{\frac{-1}{p-1}}(x)\varphi_R^{\frac{-1}{p-1}}(x)|\Delta \varphi_R(x)|^{\frac{p}{p-1}}\,dx	
\end{equation}
and
\begin{equation}\label{oui}
\begin{aligned}
|\Delta \varphi_R(x)| &\leq C  \psi_R^{\iota-2}(x)\left(|\Delta\psi_R(x)|+|\nabla\psi_R(x)|^2\right)\\
&\leq C  \psi_R^{\iota-2}(x)\left(R^{-\left(1+\frac{\min\{\sigma,2\}}{2}\right)}+R^{-2}\right)\\
&\leq C R^{-\left(1+\frac{\min\{\sigma,2\}}{2}\right)} \psi_R^{\iota-2}(x)
\end{aligned}
\end{equation}
for all $x\in B(x_0,\gamma R)\backslash B(x_0,R)$. The above estimate yields
\begin{equation}\label{S3-LL45}
\begin{aligned}
\varphi_R^{\frac{-1}{p-1}}(x)|\Delta \varphi_R(x)|^{\frac{p}{p-1}}& \leq C R^{-\frac{\left(1+\frac{\min\{\sigma,2\}}{2}\right)p}{p-1}}\psi_R^{\iota-\frac{2p}{p-1}}(x)\\
&\leq C R^{-\frac{\left(1+\frac{\min\{\sigma,2\}}{2}\right)p}{p-1}}
\end{aligned}
\end{equation}
for all $x\in B(x_0,\gamma R)\backslash B(x_0,R)$.  Finally, \eqref{est-LL4.5} follows from \eqref{S1-LL45}, \eqref{S11-LL45}, \eqref{S2-LL45} and \eqref{S3-LL45}. The proof is complete.
\end{proof}

\begin{lemma}\label{LL4.6}{\rm{
Let $V^{\frac{-1}{p-1}}\in L^1_{\loc}(\mathbb{M})$. For  $\gamma>\gamma_0$,  we have 
\begin{equation}\label{est-LL4.6}
A_4(\psi)\leq C T^{1-\frac{p}{p-1}}R^{-\frac{\left(1+\frac{\min\{\sigma,2\}}{2}\right)p}{p-1}}	\int_{B(x_0,\gamma R)} V^{\frac{-1}{p-1}}(x)\,dx.
\end{equation}
}}
\end{lemma}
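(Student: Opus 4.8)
The plan is to recognise that $A_4$ merely combines the temporal scaling already obtained for $A_2$ in Lemma \ref{LL4.4} with the spatial scaling already obtained for $A_3$ in Lemma \ref{LL4.5}, so no genuinely new estimate is needed. First I would exploit the product structure of the test function \eqref{testf-P1}: since $\psi(t,x)=\xi_T(t)\varphi_R(x)$ and the Laplacian acts only in the spatial variable, we have $\psi_t(t,x)=\xi_T'(t)\varphi_R(x)$ and hence $\Delta\psi_t(t,x)=\xi_T'(t)\Delta\varphi_R(x)$. Substituting into \eqref{A4psi} and separating the variables gives the factorisation
\begin{equation*}
A_4(\psi)=\left(\int_0^\infty \xi_T^{\frac{-1}{p-1}}(t)\left|\xi_T'(t)\right|^{\frac{p}{p-1}}\,dt\right)\left(\int_{\mathbb{M}}V^{\frac{-1}{p-1}}(x)\varphi_R^{\frac{-1}{p-1}}(x)\left|\Delta\varphi_R(x)\right|^{\frac{p}{p-1}}\,dx\right).
\end{equation*}

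Next, the temporal factor is exactly the integral controlled in the proof of Lemma \ref{LL4.4}: writing $\xi_T'(t)=T^{-1}\iota\,\xi^{\iota-1}(T^{-1}t)\xi'(T^{-1}t)$, using $0\leq \xi\leq 1$ and the fact that $\xi'(T^{-1}\cdot)$ is supported in $\left[\tfrac{T}{2},T\right]$, and choosing $\iota$ large enough that the resulting power $\iota-\frac{p}{p-1}$ of $\xi$ stays nonnegative, one obtains the bound $CT^{1-\frac{p}{p-1}}$. The spatial factor is verbatim the one estimated in Lemma \ref{LL4.5}: by \eqref{S2-LL45} the integrand is supported in the annulus $B(x_0,\gamma R)\setminus B(x_0,R)$, and there, by \eqref{oui} and \eqref{S3-LL45}, it is bounded by $CR^{-\frac{\left(1+\frac{\min\{\sigma,2\}}{2}\right)p}{p-1}}$, using the gradient and Laplacian controls of Lemma \ref{L2.1} together with Remark \ref{RK2.2}. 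Consequently the spatial factor is at most $CR^{-\frac{\left(1+\frac{\min\{\sigma,2\}}{2}\right)p}{p-1}}\int_{B(x_0,\gamma R)}V^{\frac{-1}{p-1}}(x)\,dx$, and multiplying the two bounds yields \eqref{est-LL4.6}.

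I do not anticipate any real obstacle in this argument, since it is a direct reuse of the estimates established for $A_2$ and $A_3$. The only points that require care are the clean separation of the $t$- and $x$-integrals, which is legitimate precisely because $\Delta$ acts solely on the spatial factor $\varphi_R$, and the choice of $\iota\gg 1$ ensuring that the exponents $\iota-\frac{p}{p-1}$ and $\iota-\frac{2p}{p-1}$ appearing on $\xi$ and $\psi_R$ remain nonnegative, so that all integrands stay bounded and the manipulations are valid.
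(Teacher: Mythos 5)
Your proposal is correct and follows essentially the same route as the paper: factorize $A_4(\psi)$ into a temporal integral bounded by $CT^{1-\frac{p}{p-1}}$ (as in Lemma \ref{LL4.4}) and the spatial integral already controlled in the proof of Lemma \ref{LL4.5} via \eqref{S2-LL45} and \eqref{S3-LL45}. No gaps.
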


\begin{proof}
By \eqref{A4psi} and \eqref{testf-P1}, we have
\begin{equation}\label{S1-LL46}
A_4(\psi)=\left(\int_0^\infty \xi_T^{\frac{-1}{p-1}}(t) \left|\xi_T'(t)\right|^{\frac{p}{p-1}}\,dt\right)\left(\int_{\mathbb{M}} V^{\frac{-1}{p-1}}(x)\varphi_R^{\frac{-1}{p-1}}(x)|\Delta \varphi_R(x)|^{\frac{p}{p-1}}\,dx\right).	
\end{equation}
On the other hand, by the definition of the function $\xi_T$ and the properties of the cut-off function $\xi$, we have
\begin{equation}\label{S2-LL46}
\int_0^\infty \xi_T^{\frac{-1}{p-1}}(t) \left|\xi_T'(t)\right|^{\frac{p}{p-1}}\,dt\leq CT^{1-\frac{p}{p-1}}.	
\end{equation}
Finally, \eqref{est-LL4.6} follows from \eqref{S2-LL45}, \eqref{S3-LL45}, \eqref{S1-LL46} and \eqref{S2-LL46}. The proof is complete.
\end{proof}

\subsection{The homogeneous case ($W\equiv 0$)}
We now give the 
\subsubsection*{Proof of Theorem \ref{T3.2}}
We use the contradiction argument. Let us suppose that $u\in L^p_{\loc}\left(Q,V\,dx\,dt\right)\cap L^1_{\loc}\left(Q\right)$ is a weak solution to \eqref{P1}. By Lemma \ref{PR4.1} (with $W\equiv 0$) and Lemma \ref{LL4.2}, for $T,R,\ell\gg 1$, we obtain 
\begin{equation}\label{S1-TT1}
\int_{\mathbb{M}}u_0(x)\left(\psi(0,x)-\psi_t(0,x)-\Delta \psi(0,x)\right)\,dx+\int_{\mathbb{M}}u_1(x)\psi(0,x)\,dx\leq C\sum_{j=1}^4A_j(\psi),
\end{equation}
where $\psi$ is the test function given by \eqref{testf-P1}. On the other hand, by the definition of the function $\psi$ and the properties of the function $\xi$, we have
\begin{equation}\label{S2-TT1}
\psi_t(0,x)=\xi_T'(0)\varphi_R(x)=0,\quad x\in \mathbb{M}. 	
\end{equation}
Furthermore, by Lemma \ref{L2.1}, for $\gamma>\gamma_0$,  we have
\begin{equation}\label{S3-TT1}
\begin{aligned}
\int_{\mathbb{M}}u_0(x)\psi(0,x)\,dx&=\xi_T(0)\int_{B(x_0,\gamma R)}u_0(x)\varphi_R(x)\,dx\\
&=\xi^\iota(0) \int_{B(x_0,\gamma R)}u_0(x) \psi_R^\iota(x)\,dx\\
&=\int_{B(x_0,\gamma R)}u_0(x) \psi_R^\iota(x)\,dx. 
\end{aligned}
\end{equation}
Similarly, by Lemma \ref{L2.1}, for  $\gamma>\gamma_0$,  we have
\begin{equation}\label{S4-TT1}
\int_{\mathbb{M}}u_0(x)\Delta\psi(0,x)\,dx=\int_{B(x_0,\gamma R)\backslash B(x_0,R)}u_0(x)\Delta  (\psi_R^\iota)(x)\,dx
\end{equation}
and 
\begin{equation}\label{S5-TT1}
\int_{\mathbb{M}}u_1(x)\psi(0,x)\,dx=\int_{B(x_0,\gamma R)}u_1(x) \psi_R^\iota(x)\,dx. 	
\end{equation}
Next, in view of \eqref{S1-TT1}, \eqref{S2-TT1}, \eqref{S3-TT1}, \eqref{S4-TT1}, \eqref{S5-TT1} and Lemmas \ref{LL4.3}, \ref{LL4.4}, \ref{LL4.5}, \ref{LL4.6},   we get
$$
\begin{aligned}	
&\sum_{i=0}^1\int_{B(x_0,\gamma R)} u_i(x)\psi_R^\iota(x)\,dx-\int_{B(x_0,\gamma R)\backslash B(x_0,R)}u_0(x)\Delta  (\psi_R^\iota)(x)\,dx\\
&\leq  C \left(T^{1-\frac{p}{p-1}}	\int_{B(x_0,\gamma R)} V^{\frac{-1}{p-1}}(x)\,dx
+ TR^{-\frac{\left(1+\frac{\min\{\sigma,2\}}{2}\right)p}{p-1}}	\int_{B(x_0,\gamma R)} V^{\frac{-1}{p-1}}(x)\,dx\right).
 \end{aligned}
$$
Taking $T=R^{1+\frac{\min\{\sigma,2\}}{2}}$, the above estimate reduces to 
\begin{equation}\label{S6-TT1}
\begin{aligned}
&\sum_{i=0}^1\int_{B(x_0,\gamma R)} u_i(x)\psi_R^\iota(x)\,dx-\int_{B(x_0,\gamma R)\backslash B(x_0,R)}u_0(x)\Delta  (\psi_R^\iota)(x)\,dx\\
&\leq C R^{-\frac{\left(1+\frac{\min\{\sigma,2\}}{2}\right)}{p-1}}\int_{B(x_0,\gamma R)} V^{\frac{-1}{p-1}}(x)\,dx.
\end{aligned}
\end{equation}
On the other hand, by \eqref{oui}, and since $0\leq \psi_R\leq 1$ (by Lemma \ref{L2.1}), we have
$$
\begin{aligned}
\left|\int_{B(x_0,\gamma R)\backslash B(x_0,R)}u_0(x)\Delta  (\psi_R^\iota)(x)\,dx\right|&\leq \int_{B(x_0,\gamma R)\backslash B(x_0,R)}|u_0(x)|\left|\Delta  (\psi_R^\iota)(x)\right|\,dx	\\
&\leq C R^{-\left(1+\frac{\min\{\sigma,2\}}{2}\right)}\int_{B(x_0,\gamma R)\backslash B(x_0,R)}|u_0(x)|\,dx.
\end{aligned}
$$
Furthermore, since $u_0\in L^1(\mathbb{M})$, by the dominated convergence theorem,  we have
$$
\lim_{R\to \infty} R^{-\left(1+\frac{\min\{\sigma,2\}}{2}\right)}\int_{B(x_0,\gamma R)\backslash B(x_0,R)}|u_0(x)|\,dx=0,
$$
which yields
\begin{equation}\label{S7-TT1}
\lim_{R\to \infty} \int_{B(x_0,\gamma R)\backslash B(x_0,R)}u_0(x)\Delta  (\psi_R^\iota)(x)\,dx=0. 
\end{equation}
Using again the dominated convergence theorem and taking into consideration that $u_i\in L^1(\mathbb{M})$, $i=0,1$, it follows from Lemma \ref{L2.1} that 
\begin{equation}\label{S8-TT1}
\lim_{R\to \infty} \sum_{i=0}^1\int_{B(x_0,\gamma R)} u_i(x)\psi_R^\iota(x)\,dx=	\sum_{i=0}^1\int_{\mathbb{M}} u_i(x)\,dx. 
\end{equation}
Thus, taking the infimum limit as $R\to\infty$ in \eqref{S6-TT1}, in view of \eqref{cd-blow-up-P1},  \eqref{S7-TT1} and \eqref{S8-TT1}, we get
$$
\sum_{i=0}^1\int_{\mathbb{M}} u_i(x)\,dx\leq 0,
$$
which contradicts \eqref{cdu01}.  This completes the proof of Theorem \ref{T3.2}. \hfill $\square$

\subsection{The inhomogeneous case ($W\not\equiv 0$)}

We provide below the proof of Theorem \ref{T3.9}.

\subsubsection*{Proof of Theorem \ref{T3.9}}
We also use the contradiction argument. Let us suppose that $u\in L^p_{\loc}\left(Q,V\,dx\,dt\right)\cap L^1_{\loc}\left(Q\right)$ is a weak solution to \eqref{P1}. By Lemmas \ref{PR4.1} and \ref{LL4.2}, for $T,R,\ell\gg 1$, we obtain 
$$
\begin{aligned}
\int_{\mathbb{M}}u_0(x)\left(\psi(0,x)-\psi_t(0,x)-\Delta \psi(0,x)\right)\,dx&+\int_{\mathbb{M}}u_1(x)\psi(0,x)\,dx\\&+\int_Q W(t,x)\psi(t,x)\,dx\,dt\\
&\leq C\sum_{j=1}^4A_j(\psi)	,
\end{aligned}
$$
where $\psi$ is the test function given by \eqref{testf-P1}. Taking into consideration \eqref{S2-TT1}, \eqref{S4-TT1} and using \eqref{u01-cdP2}, we get
\begin{equation}\label{LALA1}
-\int_{B(x_0,\gamma R)\backslash B(x_0,R)}u_0(x)\Delta  (\psi_R^\iota)(x)\,dx+\int_Q W(t,x)\psi(t,x)\,dx\,dt\leq C\sum_{j=1}^4A_j(\psi)	
\end{equation}
for all $\gamma>\gamma_0$. Furthermore, since $W\geq 0$ a.e., making use of Lemma \ref{L2.1}, \eqref{testf-P1} and the properties of the cut-off function $\xi$, we obtain 
\begin{equation}\label{LALA2}
\begin{aligned}
\int_Q W(t,x)\psi(t,x)\,dx\,dt&\geq \int_0^{\frac{T}{2}}\int_{B(x_0,R)}	W(t,x)\,dx\,dt.
\end{aligned}
\end{equation}
Hence, it follows from \eqref{LALA1}, \eqref{LALA2} and Lemmas \ref{LL4.3}, \ref{LL4.4}, \ref{LL4.5}, \ref{LL4.6},   that 
$$
\begin{aligned}
& -\int_{B(x_0,\gamma R)\backslash B(x_0,R)}u_0(x)\Delta  (\psi_R^\iota)(x)\,dx+	\int_0^{\frac{T}{2}}\int_{B(x_0,R)}	W(t,x)\,dx\,dt\\
&\leq C \left(T^{1-\frac{p}{p-1}}	\int_{B(x_0,\gamma R)} V^{\frac{-1}{p-1}}(x)\,dx
+ TR^{-\frac{\left(1+\frac{\min\{\sigma,2\}}{2}\right)p}{p-1}}	\int_{B(x_0,\gamma R)} V^{\frac{-1}{p-1}}(x)\,dx\right),
\end{aligned}
$$
that is,
$$
\begin{aligned}
& \left(-\int_{B(x_0,\gamma R)\backslash B(x_0,R)}u_0(x)\Delta  (\psi_R^\iota)(x)\,dx\right)\left(	\int_0^{\frac{T}{2}}\int_{B(x_0,R)}	W(t,x)\,dx\,dt\right)^{-1}+1\\
&\leq C \left(T^{1-\frac{p}{p-1}}	\int_{B(x_0,\gamma R)} V^{\frac{-1}{p-1}}(x)\,dx
+ TR^{-\frac{\left(1+\frac{\min\{\sigma,2\}}{2}\right)p}{p-1}}	\int_{B(x_0,\gamma R)} V^{\frac{-1}{p-1}}(x)\,dx\right)\\
&\quad \cdot \left(	\int_0^{\frac{T}{2}}\int_{B(x_0,R)}	W(t,x)\,dx\,dt\right)^{-1}.
\end{aligned}
$$
Taking $T=R^{1+\frac{\min\{\sigma,2\}}{2}}$, the above estimate reduces to 
\begin{equation}\label{final-estimate}
\begin{aligned}
& \left(-\int_{B(x_0,\gamma R)\backslash B(x_0,R)}u_0(x)\Delta  (\psi_R^\iota)(x)\,dx\right)\left(	\int_0^{\frac{R^{1+\frac{\min\{\sigma,2\}}{2}}}{2}}\int_{B(x_0,R)}	W(t,x)\,dx\,dt\right)^{-1}+1\\
&\leq C R^{-\frac{\left(1+\frac{\min\{\sigma,2\}}{2}\right)}{p-1}}\left(\int_{B(x_0,\gamma R)} V^{\frac{-1}{p-1}}(x)\,dx\right) \left(	\int_0^{\frac{R^{1+\frac{\min\{\sigma,2\}}{2}}}{2}}\int_{B(x_0,R)}	W(t,x)\,dx\,dt\right)^{-1}.
\end{aligned}
\end{equation}
On the other hand, since $u_0\in L^1(\mathbb{M})$, \eqref{S7-TT1} holds. Furthermore, since $W\in L^1_{\loc}(Q)$, $W\geq 0$ a.e. and $W\not\equiv 0$,  then there exist $R_0,R_1>0$ such that 
$$
\int_0^{\frac{R^{1+\frac{\min\{\sigma,2\}}{2}}}{2}}\int_{B(x_0,R)}	W(t,x)\,dx\,dt\geq 	\int_0^{R_0}\int_{B(x_0,R_1)}	W(t,x)\,dx\,dt>0,
$$
which yields
$$
\begin{aligned}
&\left|\left(-\int_{B(x_0,\gamma R)\backslash B(x_0,R)}u_0(x)\Delta  (\psi_R^\iota)(x)\,dx\right)\left(	\int_0^{\frac{R^{1+\frac{\min\{\sigma,2\}}{2}}}{2}}\int_{B(x_0,R)}	W(t,x)\,dx\,dt\right)^{-1}\right|\\
&\leq C \left|\int_{B(x_0,\gamma R)\backslash B(x_0,R)}u_0(x)\Delta  (\psi_R^\iota)(x)\,dx\right|.
\end{aligned}
$$
Then, by \eqref{S7-TT1}, we deduce that 
\begin{equation}\label{limu0delta}
\lim_{R\to \infty} \left(-\int_{B(x_0,\gamma R)\backslash B(x_0,R)}u_0(x)\Delta  (\psi_R^\iota)(x)\,dx\right)\left(	\int_0^{\frac{R^{1+\frac{\min\{\sigma,2\}}{2}}}{2}}\int_{B(x_0,R)}	W(t,x)\,dx\,dt\right)^{-1}=0.
\end{equation}
We now take $\gamma>\gamma_0$ so that \eqref{cd2-blowup-P2} is satisfied. Then, passing to the infimum limit as $R\to \infty$ in \eqref{final-estimate}, using  \eqref{cd2-blowup-P2} and \eqref{limu0delta}, we reach a contradiction. This completes the proof of Theorem \ref{T3.9}. \hfill $\square$

\section*{Declaration of competing interest}
	The authors declare that there is no conflict of interest.

\section*{Data Availability Statements} The manuscript has no associated data.

\section*{Acknowledgments}
MR and BT was supported by the FWO Odysseus 1 grant G.0H94.18N: Analysis and Partial Differential Equations and by the Methusalem programme of the Ghent University Special Research Fund (BOF) (Grant number 01M01021). MR is also supported by EPSRC grant EP/R003025/2 and EP/V005529/1. BS is supported by Researchers Supporting Project number (RSP2023R4), King Saud University, Riyadh, Saudi Arabia. BT is also supported by the Science Committee of the Ministry of Education and Science of the Republic of Kazakhstan (Grant No. BR20281002).


\begin{thebibliography}{99}

\bibitem{ALI}
L.J. Alias,   P. Mastrolia, M. Rigoli,  Maximum principles and geometric applications, Springer Monogr. Math., Springer (2016).



\bibitem{BA}
C. Bandle, M.A. Pozio, A. Tesei, The Fujita exponent for the Cauchy problem in the hyperbolic space
J. Differ. Equ. 251 (2011) 2143--163.

\bibitem{BI}
D. Bianchi, A. Setti,  Laplacian cut-offs, porous and fast diffusion on manifolds and other applications, Calc. Var. Part. Diff. Eq. 57 (4) (2018) 1--33. 


\bibitem{BO-22}
M. B. Borikhanov, M. Ruzhansky,  B. T. Torebek, The Fujita exponent for a semilinear heat equation with forcing term on Heisenberg group, arXiv preprint arXiv:2207.03744 (2022).


\bibitem{DA}
M. D'Abbicco, $L^1$--$L^1$ estimates for a doubly dissipative semilinear wave equation, NoDEA Nonlinear Differ Equ Appl. 24 (2017) 1--23. 

\bibitem{DAM}
L. D'Ambrosio, Critical degenerate inequalities on the Heisenberg group, Manuscripta Math. 106 (2001) 519--536.


\bibitem{FI}
R. Filippucci, M. Ghergu,  Higher order evolution inequalities with nonlinear convolution terms, Nonlinear Anal. 221 (2022) 112881.

\bibitem{Fino}
A. Z. Fino, M. Ruzhansky, B. T. Torebek, Fujita-type results for the degenerate parabolic equations on the Heisenberg groups. NoDEA Nonlinear Differential Equations Appl., to appear, (2023).

\bibitem{FIR}
 V. Fischer, M. Ruzhansky, Quantization on nilpotent Lie groups, in: Progress in Mathematics, vol. 314, Birkh\"auser/Springer, Cham, 2016, p. xiii+557.



\bibitem{Fujita}
H. Fujita, On the blowing-up of solutions of the Cauchy problem for $u_t=\Delta u+u^{\alpha+1}$,
J Faculty Sci Univ Tokyo Sect I. 13 (1966) 109--124.

\bibitem{GRI99}
A.  Grigor'yan,  Analytic and geometric background of recurrence and non-explosion of the Brownian motion on Riemannian manifolds, Bull. Amer. Math. Soc. 36 (1999)  135--249. 





\bibitem{GR-K}
A. Grigor'yan, V. A. Kondratiev, On the existence of positive solutions of semilinear elliptic inequalities on Riemannian manifolds, In Around the research of Vladimir Maz'ya. II, volume 12 of Int. Math. Ser. (N. Y.), pages 203--218. Springer, New York, 2010.

\bibitem{GR-Sun}
A. Grigor'yan, Y. Sun, On non-negative solutions of the inequality $\Delta u+u^\sigma\leq 0$ on Riemannian manifolds, Comm. Pure Appl. Math. 67 (2014) 1336--1352 .


\bibitem{HA1}
N. Hayashi, E. I. Kaikina, P. I. Naumkin, Damped wave equation with super critical nonlinearities, Differential Integral Equations. 17 (2004) 637--652.



\bibitem{HA2}
N. Hayashi, E. I. Kaikina,  P. I. Naumkin, Damped wave equation with a critical nonlinearity, Trans. Amer. Math. Soc. 358 (2006) 1165--1185.

\bibitem{HO}
T. Hosono, T. Ogawa,  Large time behavior and $L^p$--$L^q$ estimate of 2-dimensional nonlinear damped wave equations, J. Differential Equations. 203 (2004) 82--118.

\bibitem{IKeda}
M. Ikeda, T. Inui,  Y. Wakasugi, The Cauchy problem for the nonlinear damped wave equation with slowly decaying data, NoDEA Nonlinear Differential Equations Appl. 24 (2017) 1--53.



\bibitem{IK}
R. Ikehata, H. Takeda, Critical exponent for nonlinear wave equations with frictional and viscoelastic damping terms, Nonlinear Anal. 148 (2017) 228--253.

\bibitem{IK2}
R. Ikehata, H. Takeda, Large time behavior of global solutions to nonlinear wave equations with frictional and viscoelastic damping terms, Osaka J Math. 56 (2019) 807--830.

\bibitem{IK3}
R. Ikehata, K. Tanizawa, Global existence of solutions for semilinear damped wave equations
in $\RR^N$ with noncompactly supported initial data, Nonlinear Anal. 61 (2005) 1189--1208.

\bibitem{JKS}
M. Jleli, M. Kirane, B. Samet, A Fujita-type theorem for a multitime evolutionary p-Laplace inequality in the Heisenberg group, Electron. J. Differ. Equ. 2016 (303) (2016) 1--8.


\bibitem{JS2}
M. Jleli, B. Samet, New blow-up phenomena for hyperbolic inequalities with combined nonlinearities,
J. Math. Anal. Appl. 494 (2021) Article 124444.

\bibitem{JSS}
M. Jleli, B. Samet,  Nonexistence for time-fractional wave inequalities on Riemannian manifolds, Discr. Cont. Dyn. Syst. S. 16 (2023) 1517--1536.


\bibitem{JS3}
M. Jleli, B. Samet, Hyperbolic inequalities involving nonlinear convolution terms on Riemannian manifolds, J. Geom. Anal. 33 (2023), 21 pp.


\bibitem{JSV21}
M. Jleli, B. Samet, C. Vetro, A general nonexistence result for inhomogeneous semilinear wave equations with double damping and potential terms, Chaos, Solitons and Fractals. 144 (2021) 110673.

\bibitem{JSV22}
M. Jleli, B. Samet, C. Vetro,  Nonexistence of solutions to higher order evolution inequalities with nonlocal source term on Riemannian manifolds, Complex Var. Elliptic Equ. 3 (2022) 1--18. 


\bibitem{JSV}
M. Jleli, B. Samet, C. Vetro, A blow-up result for a nonlinear wave equation on manifolds: the critical case, Applicable Analysis. 102(5) (2023) 1463--1472.



\bibitem{KA}
A. Kassymov, N. Tokmagambetov, B.T. Torebek,  Nonexistence results for the hyperbolic-type equations on graded Lie groups, Bull. Malays. Math. Sci. Soc. 43 (2020) 4223--4243.


\bibitem{Kirane}
M. Kirane, M. Qafsaoui, Fujita's exponent for a semilinear wave equation with linear damping, Adv Nonlinear Stud. 2(2002) 41--49.


\bibitem{LA}
G. G. Laptev, Nonexistence results for higher-order evolution partial differential inequalities,
Proc. Amer. Math. Soc. 131 (2003) 415--423.

\bibitem{MA}
P. Mastrolia, D. Monticelli, F. Punzo, Nonexistence of solutions to parabolic differential inequalities with a potential on Riemannian manifolds, Math Ann. 367 (2017) 929--963. 

\bibitem{MI}
E. Mitidieri, S. I. Pohozaev, Nonexistence of weak solutions for some degenerate and singular hyperbolic problems on $\RR^n$, Proc. Steklov Inst. Math. 252 (2001) 1--19.

\bibitem{MI2}
E. Mitidieri, S. I. Pohozaev, A priori estimates and blow-up of solutions of nonlinear partial differential equations and inequalities, Proc. Steklov. Inst. Math. 234 (2001) 1--362.

\bibitem{MPS}
D. Monticelli, F. Punzo, M. Squassina, Nonexistence for hyperbolic problems on Riemannian manifolds, Asymptot. Anal. 120 (2020) 87--101.

\bibitem{Peter}
P. Petersen,  Riemannian geometry, Graduate Texts in Mathematics,  Springer, New York (2006).


\bibitem{PI}
S. Pigola, M. Rigoli, A. Setti, Vanishing and Finiteness Results in Geometric Analysis,
Birkh\"auser Verlag, Berlin (2008).




\bibitem{PO}
S. I. Pohozaev, L. Veron, Blow-up results for nonlinear hyperbolic inequalities,
Ann. Sc. Norm. Super. Pisa, Cl. Sci. 29 (2000)  393--420.

\bibitem{PO2}
S. I. Pohozaev, L. Veron, Nonexistence results of solutions of semilinear differential inequalities on the Heisenberg group, Manuscripta Math. 102 (2000) 85--99.

\bibitem{Punzo}
F. Punzo, Support properties of solutions to nonlinear parabolic equations with variable density in the hyperbolic space, Discrete Contin. Dyn. Syst., Ser. S. 5 (2012) 657--670.


\bibitem{RU11}
M. Ruzhansky, N. Tokmagambetov,  Nonlinear damped wave equations for the sub-Laplacian on the Heisenberg group and for Rockland operators on graded Lie groups, J. Differential Equations. 265(10) (2018) 5212--5236.

\bibitem{RU01} M. Ruzhansky, N. Tokmagambetov, On nonlinear damped wave equations for positive operators. I. Discrete spectrum. Differential Integral Equations, 32 (2019) 455--478.

\bibitem{RU3}
M. Ruzhansky, N. Yessirkegenov,  Existence and non-existence of global solutions for semilinear heat equations and inequalities on sub-Riemannian manifolds, and Fujita exponent on unimodular Lie groups, J. Differential Equations. 308 (2022)  455--473.

\bibitem{TAT}
D. Tataru, Strichartz estimates in the hyperbolic space and global existence for the semilinear wave equation, Trans. Amer. Math. Soc. 353(2) (2001)  795--807.


\bibitem{TO}
G. Todorova, B. Yordanov, Critical exponent for a nonlinear wave equation with damping, J. Differential Equations. 174 (2001) 464--489.

\bibitem{BO}
B. T. Torebek, Critical exponents for the p-Laplace heat equations with combined nonlinearities, J. Evol. Equ., 23 (2023) 1--15.


\bibitem{Wang}
Z. Wang, J. Yin, A note on semilinear heat equation in hyperbolic space, J. Differential Equations. 256 (2014)  1151--1156.

\bibitem{Wang2}
Z. Wang, J. Yin, Asymptotic behaviour of the lifespan of solutions for a semilinear heat equation in hyperbolic space, Proc. R. Soc. Edinb., Sect. A, Math. 146 (2016) 1091--1114.

\bibitem{Zhang0}
Q. S. Zhang, The critical exponent of a reaction diffusion equation on some Lie groups, Math. Z. 228 (1998) 51--72

\bibitem{Zhang1}
Q. S. Zhang, Blow-up results for nonlinear parabolic equations on manifolds, Duke Math. J. 97 (1999) 515--539.

\bibitem{Zhang2}
Q. S. Zhang, A new critical behavior for nonlinear wave equations, J. Comput. Anal. Appl. 2 (2000) 277--292.


\bibitem{Zhang 2001}
Q. S. Zhang, A blow-up result for a nonlinear wave equation with damping: The critical case, C. R. Acad. Sci. Paris., 333 (2001) 109--114.



	
\end{thebibliography}
\end{document}